\documentclass[final,hidelinks]{siamart251216}

\usepackage{lipsum}
\usepackage{epstopdf}
\usepackage{algorithmic}
\ifpdf
  \DeclareGraphicsExtensions{.eps,.pdf,.png,.jpg}
\else
  \DeclareGraphicsExtensions{.eps}
\fi
\usepackage{amsmath}
\usepackage{amssymb} 
\usepackage{amsfonts}
\usepackage{bm} 
\usepackage{graphicx}
\usepackage[algo2e,ruled,vlined,linesnumbered]{algorithm2e}
\usepackage{booktabs} 
\usepackage{siunitx,xfp} 
\usepackage[l3]{csvsimple}   
\usepackage{enumitem}
\usepackage{pgfplots}
\usepackage{pgfplotstable}
\pgfplotsset{compat=1.18}
\usepgfplotslibrary{groupplots}
\usepgfplotslibrary{external}
\usepackage{tikz}
\usetikzlibrary {patterns,patterns.meta} 
\usetikzlibrary{spy,calc}
\usepackage[Tol]{colorblind}
\usepackage{subcaption}
\usepackage{float}
\usepackage{multirow}

\newsiamremark{remark}{Remark}
\crefname{remark}{Remark}{Remarks}
\newsiamremark{hypothesis}{Hypothesis}
\crefname{hypothesis}{Hypothesis}{Hypotheses}
\newsiamthm{claim}{Claim}
\newsiamremark{fact}{Fact}
\crefname{fact}{Fact}{Facts}

\headers{Compact Rational Krylov for BEM Frequency Sweeping}{K. Bruyninckx, D. Huybrechs and K. Meerbergen}

\title{Compact Rational Krylov for Parametrized Systems with Application to BEM Frequency Sweeping\thanks{Submitted to the journal's Software, High-Performance Computing, and Computational Methods in Science and Engineering section August 6, 2026.\funding{This work was supported by FWO-Flanders project G088622N}}}

\author{Kobe Bruyninckx\thanks{Department of Computer Science, KU Leuven, Leuven, 3000
  (\email{kobe.bruyninckx@kuleuven.be}, \email{daan.huybrechs@kuleuven.be}, \email{karl.meerbergen@kuleuven.be}).}
\and Daan Huybrechs\footnotemark[2]
\and Karl Meerbergen\footnotemark[2]}

\usepackage{amsopn}

\ifpdf
\hypersetup{
  pdftitle={Compact Rational Krylov for Parametrized Systems with Application to BEM Frequency Sweeping},
  pdfauthor={K. Bruyninckx, D. Huybrechs, and K. Meerbergen}
}
\fi

\DeclareMathAlphabet{\mathsfit}{T1}{\sfdefault}{\mddefault}{\sldefault}
\renewcommand{\vec}[1]{\mathsf{#1}}
\newcommand{\mat}[1]{\mathsf{#1}}
\newcommand{\bvec}[1]{\bm{\mathsf{#1}}}
\newcommand{\bmat}[1]{\bm{\mathsf{#1}}}
\newcommand{\trns}[1]{{#1}^\mathsf{T}}
\newcommand{\tr}{\mathsf{T}}
\newcommand{\conj}[1]{{#1}^\mathsf{H}}
\newcommand{\cj}{\mathsf{H}}
\DeclareMathSymbol{\smin}{\mathbin}{AMSa}{"39}
\newcommand{\minone}{\smin 1} 
\newcommand{\R}{\mathbb{R}} 
\newcommand{\C}{\mathbb{C}} 
\newcommand{\bsigma}{\bm{\sigma}} 
\newcommand{\mSigma}{\mat{\Sigma}} 
\newcommand{\vx}{\vec{x}} 
\newcommand{\bvx}{\bvec{x}} 
\newcommand{\vy}{\vec{y}} 
\newcommand{\bvy}{\bvec{y}} 
\newcommand{\vz}{\vec{z}} 
\newcommand{\bvz}{\bvec{z}} 
\newcommand{\vb}{\vec{b}} 
\newcommand{\vv}{\vec{v}} 
\newcommand{\bvv}{\bvec{v}} 
\newcommand{\vf}{\vec{f}} 
\newcommand{\vr}{\vec{r}} 
\newcommand{\bvr}{\bvec{r}} 
\newcommand{\vp}{\vec{p}} 
\newcommand{\vq}{\vec{q}} 
\newcommand{\bvd}{\bvec{d}} 
\newcommand{\mP}{\mat{P}} 
\newcommand{\mC}{\mat{C}} 
\newcommand{\bbC}{\bar{\bmat{C}}} 
\newcommand{\mA}{\mat{A}} 
\newcommand{\bA}{\bmat{A}} 
\newcommand{\mB}{\mat{B}} 
\newcommand{\bB}{\bmat{B}} 
\newcommand{\bAB}[1]{\bmat{A}-#1\bmat{B}} 
\newcommand{\mM}{\mat{M}} 
\newcommand{\mN}{\mat{N}} 
\newcommand{\mMN}[1]{\mat{M}-#1\mat{N}} 
\newcommand{\mV}{\mat{V}} 
\newcommand{\bmV}{\bmat{V}} 
\newcommand{\mZ}{\mat{Z}} 
\newcommand{\bmZ}{\bmat{Z}} 
\newcommand{\mQ}{\mat{Q}} 
\newcommand{\mU}{\mat{U}} 
\newcommand{\bmU}{\bmat{U}} 
\newcommand{\mS}{\mat{S}} 
\newcommand{\bmS}{\bmat{S}} 
\newcommand{\Id}{\mat{I}} 
\newcommand{\rank}{\mathrm{rank}} 
\newcommand{\vspan}{\mathrm{span}} 
\newcommand{\diag}{\mathrm{diag}} 
\newcommand{\kron}{\otimes} 
\newcommand{\Hu}{\underline{\mat{H}}}
\newcommand{\Ku}{\underline{\mat{K}}}
\newcommand{\Tu}{\underline{\mat{T}}}
\newcommand{\red}{\mathrm{red}}

\newcommand{\hbeta}{\hat{\beta}}
\newcommand{\setmu}{S}
\newcommand{\cU}{\mathcal{U}} 
\newcommand{\cL}{\mathcal{L}} 
\newcommand{\cP}{\mathcal{P}} 
\newcommand{\mPu}{\underline{\mP}}
\newcommand{\mCu}{\underline{\mC}}
\newcommand{\vxu}{\underline{\vx}}
\newcommand{\vbu}{\underline{\vb}}
\newcommand{\vru}{\underline{\vr}}

\newcommand{\vqu}{\underline{\vq}}
\newcommand{\px}{\mathbf{x}} 
\newcommand{\py}{\mathbf{y}} 
\newcommand{\Gk}{G_\kappa} 
\newcommand{\Hm}{\mathcal{H}}
\newcommand{\restr}[2]{{#1}_{|#2}}
\newcommand{\bt}{t}
\newcommand{\bs}{s}
\DeclareMathOperator*{\argmin}{arg\,min}
\DeclareMathOperator*{\argmax}{arg\,max}

\def\CC{{C\nolinebreak[4]\hspace{-.05em}\raisebox{.4ex}{\tiny\bf ++}}}

\newcolumntype{N}[1]{S[table-format=#1]}

\begin{document}
\maketitle

\begin{abstract}
In parametrized linear systems $\mP(\mu)\vx=\vb$ the system matrix $\mP$ depends nonlinearly on a parameter $\mu$ and solutions are sought for many values of this parameter. We show that the compact rational Krylov (CORK) framework, originally introduced to solve nonlinear eigenvalue problems, can be used to efficiently produce approximate solutions to such a system for many values of the parameter at once. In this approach, the parametrized system is first linearized, resulting in a large shifted linear system $(\bA-\mu\bB)\bvy=\bvd$. We formulate a left- and right-preconditioned rational Krylov GMRES method for shifted linear systems. In the setting of parametrized linear systems, these can exploit the structure in the linearization, and in combination with the CORK framework, computational and memory complexity mainly depend on the problem size, less the degree of the linearization. Additionally, we show how to incorporate a right-hand side $\vb(\mu)$ that also depends on the parameter, how to choose the shifts to steer convergence and how to allow for inexact solves at these shifts throughout the iterations. As an application we consider the ‘frequency sweeping’ of Helmholtz scattering problems through the Boundary Element Method (BEM), enabled via an efficient representation of the dense but data-sparse wavenumber-dependent system matrix.
\end{abstract}

\begin{keywords}
rational Krylov, parametrized linear systems, companion linearization, shifted linear systems, inexact Krylov, hierarchical matrices, boundary element method, Helmholtz equation

\end{keywords}

\begin{MSCcodes}
65F10, 65F55, 65N38, 35J05
\end{MSCcodes} 

\section{Introduction}\label{sec:1_intro}

The motivation of this work arises from the field of integral equations. The Boundary Element Method (BEM) is a popular numerical technique for solving scattering problems in the frequency domain, based on an integral equation reformulation of the governing partial differential equation, such as the Helmholtz equation. Among many advantages of this approach, the resulting BEM system matrix is generally dense and nonlinearly dependent on the frequency. The dense matrix can still be approximated in a data-sparse way, using a range of algebraic and analytic techniques~\cite{GreengardRokhlin1987,Hackbusch1999,HackbuschBorm2002,XiaChandrasekaran2010,HoYing2016,MindenEtAl2017,JiangGreengard2025,BruyninckxEtAl2025} and the resulting systems can be solved efficiently with iterative or direct solvers~\cite{Bebendorf2005,ChaiJiao2010,ChaiJiao2011,HoYing2016,CoulierEtAl2017,SushnikovaEtAl2023}.

In a `frequency sweep', the solution to a scattering problem is sought over a range of frequencies. While the above techniques speed up solution at a single frequency, repeating such procedures for all frequencies of interest may still be intractable. Earlier research on frequency sweeping has involved both the Finite Element Method (FEM) and BEM, see \cite{Marburg2024} for a recent survey. Frequency sweeping requires tackling both the efficient construction and representation of the nonlinear frequency-dependent system, e.g.\ \cite{GopalMartinsson2023,Dirckx2022}, as well as efficiently resolving the solutions over the frequency range, e.g. \cite{Panagiotopoulos2023,BaydounEtAl2020,BaydounEtAl2021}.

The wavenumber-dependent system for Helmholtz BEM leads to a parametrized linear system, given in general by
\begin{equation}\label{eq:parametrized_linear_system}
    \mP(\mu)\vx(\mu) = \vb(\mu)
\end{equation}
with $\mu\in\setmu\subseteq\C$ and $\mP : S \to \C^{n\times n}$, where $\vx(\mu)$ is sought for many values of $\mu$. 
The study of parametrized linear systems is rich, see \cite{GrammontEtAl2011,SoodhalterEtAl2020,KresnnerTobler2011,Correnty2024} and references therein.

Using companion linearization techniques, we formulate the BEM system as a set of shifted linear systems, linear in the parameter $\mu$, with a matrix pencil $(\bA,\bB)$ as
\begin{equation}\label{eq:shifted_linear_system}
    (\bA-\mu \bB)\bvy = \bvd.
\end{equation}
This involves the rational approximation of $\mP(\mu)$. The linearized system can be solved for many different values of $\mu$ simultaneously using Krylov subspace methods \cite[\S14.1]{SimonciniSzyld2007}, but it has (much) larger dimensions than the original one. We exploit the structure of the companion pencil and take inspiration from the compact rational Krylov (CORK) framework, originally formulated for nonlinear eigenvalue problems~\cite{VanBeeumen2015}, to efficiently apply rational Krylov.

\subsection{Related work} Relevant related work for parametrized systems was described in~\cite{JarlebringCorrenty2022,Correnty2024}, which combined a linearization using Taylor series expansions with a GMRES solver, and in~\cite{Correnty2023}, using Chebyshev interpolation of $\mP(\mu)$ in combination with BiCG. However, rational Krylov was not used in these references. In comparison, the CORK framework enables general companion linearizations, such as those associated with rational functions, and owing to rational Krylov it also allows for improved convergence of the solutions through the use of multiple shifts.

Rational Krylov for shifted systems was investigated in \cite{DaasPalitta2026}, leading to a scheme similar to the one we propose using left preconditioning, but for the special case $\bB=\Id$ and without initial shift. Rational Krylov for shifted systems with right preconditioning is equivalent to the so-called `flexible Krylov' techniques of \cite{GuEtAl2007,SaibabaEtAl2013}. In both settings, we leverage the linearization pencil structure for faster computations, and incorporate the CORK framework to reduce memory footprint.

\subsection{Results} The contributions of this article are the following. We:
\begin{itemize}[noitemsep,topsep=0pt]
    \item demonstrate how the CORK framework can be used to solve parametrized linear systems with a general pencil $(\bA,\bB)$ for a range of the parameter $\mu$;
    \item introduce a right-preconditioned variant of CORK;
    \item allow parameter dependence of the right-hand side of~\eqref{eq:parametrized_linear_system};
    \item incorporate inexact Krylov theory \cite{SimonciniSzyld2003,GiraudEtAl2007} into CORK;
    \item apply the framework, combined with the compact wavenumber-dependent BEM representation from \cite{Dirckx2022,Dirckx2024,Bruyninckx2026}, to efficiently perform frequency sweeps.
\end{itemize}
The frequency sweeping method is valid for a continuous frequency range, based on solves at a small number of frequency values. They correspond to shifts of the rational Krylov method, and can be greedily chosen very effectively based on a cheaply available estimate of the residual across the range. This is a highly attractive feature of the scheme. Afterwards, each additional solve requires only the solution of a very small linear system, whose size relates to that of the constructed Krylov subspace. We show that the method is very performant for small to moderately large frequencies.

\subsection{Outline} We summarize rational Krylov GMRES for shifted linear systems in~\S\ref{sec:2_rational-krylov}, including a variant involving right preconditioning and a greedy shift selection procedure. We introduce compact rational Krylov (CORK) for parameterized linear systems with varying right-hand sides in~\S\ref{sec:3_cork}. We integrate inexact Krylov theory in~\S\ref{sec:4_inexact-rk}. The methodology is applied to BEM frequency sweeping problems in~\S\ref{sec:5_bem} and we further illustrate the approach numerically in~\S\ref{sec:6_numexp}.

\section{Rational Krylov GMRES for shifted linear systems}\label{sec:2_rational-krylov}
The rational Krylov method was developed in \cite{Ruhe1984,Ruhe1998} and generalizes shift-and-invert Arnoldi for the solution of (generalized) eigenvalue problems. This is achieved by allowing multiple different shifts throughout the iterative process.

Rational Krylov for matrix pencils works as follows. Given a starting vector $\bvv_1$, a matrix pencil $(\bA,\bB)$ and a set of shifts $\bsigma_m=(\sigma_i)_{i=1}^{m}$, the algorithm constructs the rational Krylov subspace
\begin{equation}\label{eq:ratkrylov-subspace}
    \mathcal{K}_{m+1} = \vspan{ \left\{\bvv_1,(\bA-\sigma_1\bB)^{\minone}\bB\bvv_1,\dots,\prod_{i=1}^{m}\left((\bA-\sigma_i\bB)^{\minone}\bB\right)\bvv_1 \right\} }
\end{equation}
and builds an orthogonal basis $\bmV_{m+1}:=[\bvv_1\cdots\bvv_{m+1}]$ satisfying the recurrence relation
\begin{equation}\label{eq:rat-krylov-reccurence}
    \bA \bmV_{m+1} \Hu_m = \bB \bmV_{m+1} \Ku_m
\end{equation}
with $\Hu_m,\Ku_m\in\C^{(m+1)\times m}$ both upper Hessenberg. Letting $\mSigma_m:=\diag{(\bm{\sigma}_m)}$, $\Ku_m$ is related to the shifts as $\Ku_m=\Hu_m\mSigma_m+\Tu_m$, in which upper triangular matrix $\Tu_m$ contains the selected continuation combinations, see \cite[\S3.2.3]{Ruhe1998} and \cite[\S1.3.2]{VanBeeumen2015b}. 

Rational Krylov methods are typically applied to eigenvalue problems. Approximations to eigenpairs of $(\bA,\bB)$ can be found from the eigenpairs of the pencil $(\mat{K}_m,\mat{H}_m)$. The approximate eigenvectors live in the subspace spanned by $\bmV_{m+1}\Hu_m$.

Here, instead, we aim to solve shifted linear systems~\eqref{eq:shifted_linear_system} with pencil $(\bA,\bB)$. When trying to find eigenvalues, rational Krylov chooses the shifts $\bsigma_m$ in the region of interest to steer convergence. Similarly, we can choose the shifts close to values of $\mu$ for which we want to solve. In this way, we hope the rational Krylov subspace contains good approximations to the solutions for all these values of $\mu$.

We discuss the use of rational Krylov as an algorithm to solve shifted systems. This leads first to left-preconditioned rational Krylov GMRES (LRK-GMRES).

\subsection{Left-preconditioned RK-GMRES}\label{ssec:left-rk-gmres}

A shift-and-invert strategy can be combined with GMRES to solve shifted linear systems. One chooses a shift $\sigma$ in the range of interest for $\mu$, with associated preconditioner $(\bA-\sigma\bB)^{\minone}$. The proposition below, due to~\cite{Meerbergen2003}, formalizes how to exploit the shift-invariance of Krylov subspaces to solve the shifted system for any value of $\mu$. The solution is found from a small least-squares system, whose accuracy relates to the preconditioned residual.
\begin{proposition}[Shift-and-invert GMRES]
    Assume $m$ steps of shift-and-invert Arnoldi have been performed on the matrix pencil $(\bA,\bB)$ with shift $\sigma$, i.e.\ Arnoldi on $(\bA-\sigma\bB)^{\minone}\bB$, with starting vector $\bvv_1=\hat{\bvd}/\hbeta$, $\hat{\bvd}=(\bA-\sigma \bB)^{\minone}\bvd$, $\hbeta=\|\hat{\bvd}\|$ such that 
    \begin{equation}\label{eq:shift-invert-recurrence}
        (\bA-\sigma \bB)^{\minone}\bB \bmV_m = \bmV_{m+1}\Hu_m
    \end{equation}
    holds with $\bmV_m=[\bvv_1 \,\cdots\, \bvv_m]$ orthogonal and $\Hu_m$ upper Hessenberg. Let $\vy^\mathrm{red}_m$ solve
    \begin{equation}\label{eq:shift_and_invert_y}
\vy^\red_m = \argmin_\vy \|\hbeta\vec{e}_1-\Hu_m^\sigma(\mu)\vy\|, \quad \mbox{with} \quad \Hu_m^\sigma(\mu) :=\left[\begin{array}{c}
        \Id_m \\
        0
    \end{array}\right] + (\sigma-\mu)\Hu_m,
    \end{equation}
    a least-squares problem of size $(m+1)\times m$. $\bvy_m(\mu)=\bmV_m\vy^\mathrm{red}_m(\mu)$ minimizes the norm of the preconditioned residual
    \begin{equation}\label{eq:preconditioned_residual_shift_and_invert}
    \bvec{r}_m^{\sigma}(\mu) := (\bA-\sigma \bB)^{\minone}\left(\bvd - (\bA-\mu \bB)\bvy_m\right)
    \end{equation}
    of the shifted linear system~\cref{eq:shifted_linear_system} with norm $
\|\bvec{r}_m^{\sigma}\|=\|\hbeta\vec{e}_1-\Hu_m^\sigma(\mu)\vy^\red_m\|$.
\end{proposition}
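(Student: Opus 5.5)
The plan is to rewrite the preconditioned residual for a trial vector in the Krylov space, $\bvy=\bmV_m\vy$, directly in terms of the basis $\bmV_{m+1}$. Orthonormality of that basis then turns the residual norm into the norm of a small vector. The first step is the identity
\begin{equation*}
(\bA-\sigma\bB)^{\minone}(\bA-\mu\bB) = \Id + (\sigma-\mu)(\bA-\sigma\bB)^{\minone}\bB,
\end{equation*}
which follows from writing $\bA-\mu\bB=(\bA-\sigma\bB)+(\sigma-\mu)\bB$ and multiplying on the left by $(\bA-\sigma\bB)^{\minone}$.

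Next I apply this identity to $\bmV_m\vy$ and use the recurrence \eqref{eq:shift-invert-recurrence}:
\begin{equation*}
(\bA-\sigma\bB)^{\minone}(\bA-\mu\bB)\bmV_m\vy = \bmV_m\vy+(\sigma-\mu)\bmV_{m+1}\Hu_m\vy .
\end{equation*}
Writing $\bmV_m = \bmV_{m+1}\left[\begin{smallmatrix}\Id_m\\0\end{smallmatrix}\right]$, the right-hand side becomes $\bmV_{m+1}\Hu_m^\sigma(\mu)\vy$.

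For the other part of the residual, the starting vector was chosen so that $(\bA-\sigma\bB)^{\minone}\bvd=\hat{\bvd}=\hbeta\bvv_1=\bmV_{m+1}\hbeta\vec{e}_1$. Combining the two pieces gives
\begin{equation*}
\bvec{r}_m^\sigma(\mu)=\bmV_{m+1}\bigl(\hbeta\vec{e}_1-\Hu_m^\sigma(\mu)\vy\bigr).
\end{equation*}

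Because $\bmV_{m+1}$ has orthonormal columns, the norms agree: $\|\bvec{r}_m^\sigma\|=\|\hbeta\vec{e}_1-\Hu_m^\sigma(\mu)\vy\|$, and this holds for every $\vy\in\C^m$. Every element of $\vspan(\bmV_m)$ has the form $\bmV_m\vy$. Hence minimizing the preconditioned residual norm over the Krylov space is the same as solving the least-squares problem \eqref{eq:shift_and_invert_y}. It follows that $\bvy_m=\bmV_m\vy^\red_m$ is the minimizer, and its residual norm is as stated.

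The only delicate point is bookkeeping in the identity $\bmV_m=\bmV_{m+1}\left[\begin{smallmatrix}\Id_m\\0\end{smallmatrix}\right]$ and the sign of $(\sigma-\mu)$. One small caveat: if the Arnoldi process breaks down, then $\bmV_{m+1}$ is replaced by $\bmV_m$ and the argument goes through unchanged. I expect no substantive obstacle.
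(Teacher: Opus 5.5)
Your proposal is correct and follows the same route as the paper. The paper simply cites \cite{Meerbergen2003} for the identity $(\bA-\sigma\bB)^{\minone}(\bA-\mu\bB)\bmV_m=\bmV_{m+1}\Hu_m^\sigma(\mu)$, which you derive explicitly; both arguments then write the preconditioned residual as $\bmV_{m+1}(\hbeta\vec{e}_1-\Hu_m^\sigma(\mu)\vy)$ and conclude from the orthonormality of $\bmV_{m+1}$.
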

\begin{proof}
Following \cite{Meerbergen2003}, we have that $
(\bA-\sigma \bB)^{\minone}(\bA-\mu \bB)\bmV_m = \bmV_{m+1}\Hu_m^\sigma(\mu)$. Choosing an approximate solution of the form $\bvy_m(\mu)=\bmV_m\vy^\mathrm{red}_m(\mu)$ gives $
    \bvec{r}_m^{\sigma} = \bmV_{m+1}(\hbeta\vec{e}_1\allowbreak-\Hu_m^\sigma(\mu)\vy^\red_m)
$ and the result follows.
\end{proof}

In rational Krylov, instead of applying $(\bA-\sigma \bB)^{\minone}\bB$ in each iteration for a fixed $\sigma$, the shift varies. This results in recurrence relation \cref{eq:rat-krylov-reccurence} instead of \cref{eq:shift-invert-recurrence}. The following proposition can be seen to generalize the rational Krylov subspace method for shifted linear systems from \cite{DaasPalitta2026}; the latter is obtained by taking $\bB=\Id$ and $\sigma_0=\infty$.

\begin{proposition}[Left-preconditioned rational Krylov GMRES]\label{prop:lrk-gmres}
    Assume $m$ steps of rational Krylov have been performed on the matrix pencil $(\bA,\bB)$ with shifts $\bsigma_m=(\sigma_i)_{i=1}^m$ and starting vector $\bvv_1=\hat{\bvd}/\hbeta$, $\hat{\bvd}=(\bA-\sigma_0 \bB)^{\minone}\bvd$, $\hbeta=\|\hat{\bvd}\|$, i.e., the rational Krylov subspace~\cref{eq:ratkrylov-subspace} is constructed such that \cref{eq:rat-krylov-reccurence}
    holds. Let $\vy^\mathrm{red}_m$ solve
    \begin{equation}\label{eq:rational_krylov_y}
        \vy^\red_m = \argmin_\vy \|\hbeta\vec{e}_1-(\Ku_m - \mu \Hu_m)\vy\|,
    \end{equation}
    a least-squares problem of size $(m+1)\times m$. $\bvy_m(\mu)=\bmV_{m+1}(\Ku_m - \sigma_0 \Hu_m)\vy^\mathrm{red}_m(\mu)$ minimizes the norm of the preconditioned residual
    \begin{equation}\label{eq:preconditioned_residual_rational_krylov}
    \bvec{r}_m^{\sigma}(\mu) := (\bA-\sigma_0 \bB)^{\minone}\left(\bvd - (\bA-\mu \bB)\bvy_m\right)
    \end{equation}
    of the shifted linear system~\cref{eq:shifted_linear_system} with norm $\|\bvec{r}_m^{\sigma}\|=\|\hbeta\vec{e}_1-(\Ku_m - \mu \Hu_m)\vy^\red_m\|$.
\end{proposition}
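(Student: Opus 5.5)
The plan is to mirror the proof of the shift-and-invert GMRES proposition. The goal is a single recurrence of the form $(\bA-\sigma_0\bB)^{\minone}(\bA-\mu\bB)\,\bmV_{m+1}(\Ku_m-\sigma_0\Hu_m) = \bmV_{m+1}(\Ku_m-\mu\Hu_m)$. Once this identity is available, the residual is expressed through the orthonormal basis and the minimization reduces to the small least-squares problem.

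The key algebraic step starts from the rational Krylov recurrence \cref{eq:rat-krylov-reccurence}, $\bA\bmV_{m+1}\Hu_m=\bB\bmV_{m+1}\Ku_m$. Subtracting $\mu\bB\bmV_{m+1}\Hu_m$ from both sides gives
\begin{equation*}
(\bA-\mu\bB)\bmV_{m+1}\Hu_m = \bB\bmV_{m+1}(\Ku_m-\mu\Hu_m),
\end{equation*}
and the same identity holds with $\sigma_0$ in place of $\mu$. The two identities are combined by writing $\bA-\mu\bB = (\bA-\sigma_0\bB) + (\sigma_0-\mu)\bB$ and applying this to $\bmV_{m+1}(\Ku_m-\sigma_0\Hu_m)$:
\begin{equation*}
(\bA-\mu\bB)\bmV_{m+1}(\Ku_m-\sigma_0\Hu_m) = (\bA-\sigma_0\bB)\bmV_{m+1}(\Ku_m-\sigma_0\Hu_m) + (\sigma_0-\mu)\bB\bmV_{m+1}(\Ku_m-\sigma_0\Hu_m).
\end{equation*}
The $\sigma_0$-version of the identity replaces $\bB\bmV_{m+1}(\Ku_m-\sigma_0\Hu_m)$ by $(\bA-\sigma_0\bB)\bmV_{m+1}\Hu_m$. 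The right-hand side therefore becomes $(\bA-\sigma_0\bB)\bmV_{m+1}\bigl(\Ku_m-\sigma_0\Hu_m+(\sigma_0-\mu)\Hu_m\bigr) = (\bA-\sigma_0\bB)\bmV_{m+1}(\Ku_m-\mu\Hu_m)$. Applying $(\bA-\sigma_0\bB)^{\minone}$, assumed to exist since $\sigma_0$ is used as a preconditioning shift, yields the target recurrence.

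With $\bvy_m=\bmV_{m+1}(\Ku_m-\sigma_0\Hu_m)\vy$ and $(\bA-\sigma_0\bB)^{\minone}\bvd=\hbeta\bvv_1=\bmV_{m+1}\hbeta\vec{e}_1$, the preconditioned residual is
\begin{equation*}
\bvec{r}_m^\sigma(\mu)=\bmV_{m+1}\bigl(\hbeta\vec{e}_1-(\Ku_m-\mu\Hu_m)\vy\bigr).
\end{equation*}
Since $\bmV_{m+1}$ has orthonormal columns, $\|\bvec{r}_m^\sigma\|=\|\hbeta\vec{e}_1-(\Ku_m-\mu\Hu_m)\vy\|$. Minimizing over $\vy$ gives \cref{eq:rational_krylov_y}, and the norm formula follows.

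The main subtlety is interpretive rather than computational. The minimization is over the search space $\bmV_{m+1}(\Ku_m-\sigma_0\Hu_m)\C^m$, not over $\bmV_m\C^m$ as in the shift-and-invert case. I would note that this space lies in $\mathcal{K}_{m+1}$ and has dimension $m$ when $\Ku_m-\sigma_0\Hu_m$ has full column rank. This holds when $\sigma_0$ is not an eigenvalue of the small pencil, which I would state as an implicit assumption. I would also check that no step relies on $\Hu_m$ being invertible, only on the recurrence itself.
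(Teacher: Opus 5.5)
Your proof is correct and follows the paper's argument. Both derive $(\bA-\sigma_0\bB)^{\minone}(\bA-\mu\bB)\bmV_{m+1}(\Ku_m-\sigma_0\Hu_m)=\bmV_{m+1}(\Ku_m-\mu\Hu_m)$ from the $\sigma_0$-shifted form of \cref{eq:rat-krylov-reccurence} and then read off the residual through the orthonormal $\bmV_{m+1}$; the only difference is that the paper inverts $\bA-\sigma_0\bB$ first and then scales by $\sigma_0-\mu$, whereas you split $\bA-\mu\bB$ first and invert at the end. Your extra assumption that $\Ku_m-\sigma_0\Hu_m$ has full column rank is automatic and need not be imposed: from $(\bA-\sigma_0\bB)\bmV_{m+1}\Hu_m=\bB\bmV_{m+1}(\Ku_m-\sigma_0\Hu_m)$, with $\bA-\sigma_0\bB$ invertible, $\bmV_{m+1}$ orthonormal and $\Hu_m$ unreduced Hessenberg (hence of full column rank), every null vector of $\Ku_m-\sigma_0\Hu_m$ must vanish.
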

\begin{proof}
Adding $-\sigma_0 \bB\bmV_{m+1}\Hu_m$ to~\cref{eq:rat-krylov-reccurence} and applying $(\bA-\sigma_0 \bB)^{\minone}$ yields
\begin{equation*}
    (\bA -\sigma_0 \bB)^{\minone} \bB \bmV_{m+1} (\Ku_m - \sigma_0 \Hu_m) = \bmV_{m+1} \Hu_m.
\end{equation*}
Scaling by $(\sigma_0-\mu)$ and adding $\bmV_{m+1} (\Ku_m - \sigma_0 \Hu_m)$ gives
\begin{equation}\label{eq:rat-krylov-reccurence-2}
    (\bA-\sigma_0 \bB)^{\minone}(\bA-\mu \bB) \bmV_{m+1} (\Ku_m - \sigma_0 \Hu_m) = \bmV_{m+1} (\Ku_m - \mu \Hu_m).
\end{equation}
Using an approximate solution of the form $\bvy_m(\mu)=\bmV_{m+1}(\Ku_m - \sigma_0 \Hu_m)\vy^\mathrm{red}_m(\mu)$, the preconditioned residual equals
\begin{align*}
    \bvec{r}_m^{\sigma} &= (\bA-\sigma_0 \bB)^{\minone}\bvd - (\bA-\sigma_0 \bB)^{\minone}(\bA-\mu \bB)\bmV_{m+1}(\Ku_m - \sigma_0 \Hu_m)\vy^\red_m \\
        &= (\bA-\sigma_0 \bB)^{\minone}\bvd - \bmV_{m+1} (\Ku_m - \mu \Hu_m)\vy^\red_m.
\end{align*}
With $(\bA-\sigma_0 \bB)^{\minone}\bvd=\hbeta\bvv_1$ we obtain $\bvec{r}_m^{\sigma} = \bmV_{m+1} \left(\hbeta\vec{e}_1-(\Ku_m - \mu \Hu_m)\vy^\red_m\right)$.
\end{proof}

An initial shift $\sigma_0$ remains in the scheme and appears in the preconditioned residual. Hence, its choice plays a role in the convergence of the scheme.

\begin{remark}
We shall not work out the details here, but like to point out that technically one could use a trial space for the solution spanned by $\bmV_{m+1}(\Ku_m - \sigma_i \Hu_m)$ with $0 \leq i \leq m$, i.e., also using one of the other shifts. One can show that the corresponding solution at that shift is deflated out of the trial space. See \cite{Bruyninckx2026}.
\end{remark}

\subsection{Right-preconditioned RK-GMRES}\label{ssec:right-rk-gmres}

The question arises whether the choice of an initial shift can be avoided. A shift-invariant Krylov subspace can also be obtained using right preconditioning instead. As it turns out, this setting is exactly the same as that of `flexible GMRES for shifted systems'~\cite{SaibabaEtAl2013}, see also the flexible GMRES algorithm of \cite{GuEtAl2007}. For this reason, we shall be brief in the description here.\footnote{We use the convention of defining shifted systems by subtracting the second matrix instead of adding. This slightly changes the equations compared to \cite{SaibabaEtAl2013}.}

Right preconditioning for the solution of $(\bA-\mu \bB)\bvy = \bvd$ means
\begin{equation*}
    (\bA-\mu \bB)(\bA-\sigma \bB)^{\minone}\bar{\bvy} = \bvd, \quad \mbox{with} \quad \bvy = (\bA-\sigma \bB)^{\minone}\bar{\bvy}.
\end{equation*}
This is equivalent with $\left(\Id + (\sigma-\mu)\bB(\bA-\sigma \bB)^{\minone}\right)\bar{\bvy} = \bvd$ and suggests applying Arnoldi on $\bB(\bA-\sigma \bB)^{\minone}$ instead of $(\bA-\sigma \bB)^{\minone}\bB$. With multiple shifts, this leads to
\begin{equation}\label{eq:ratkrylov-subspace-right}
    \mathcal{K}_{m+1}^{\textrm{right}} = \vspan{ \left\{\bvv_1,\bB(\bA-\sigma_1\bB)^{\minone}\bvv_1,\dots,\prod_{i=1}^{m}\left(\bB(\bA-\sigma_i\bB)^{\minone}\right)\bvv_1 \right\} }.
\end{equation}

This time around, we can use the starting vector $\bvv_1:=\bvd/\|\bvd\|$. In this context, a matrix $\bmZ_m$ arises due to the presence of the flexible right preconditioner. It satisfies
\begin{equation*}
    \bB\bmZ_m = \bmV_{m+1}\Hu_m, \quad \mbox{and} \quad \bA\bmZ_m - \bB\bmZ_m\mSigma_m = \bmV_m\mat{T}_m.
\end{equation*}
Multiplying the first equation on the right by $\mSigma_m-\mu \Id_m$ shows that
\begin{equation}\label{eq:rrkgmres-shifted-recurrence}
    (\bA-\mu \bB)\bmZ_m = \bmV_{m+1}\left(\Tu_m + \Hu_m(\mSigma_m-\mu \Id_m)\right) = \bmV_{m+1}(\Ku_m - \mu \Hu_m).
\end{equation}
Thus, the approximate solution lies in the column space of $\bmZ_m$. It is obtained as $\bvy_m(\mu)=\bmZ_m\vy^\red_m(\mu)$, where $\vy^\mathrm{red}_m$ solves \cref{eq:rational_krylov_y} as before.

One advantage of right preconditioning is that the residual norm of the projected least-squares system, which is minimized, is equal to that of the original system,
\begin{equation}\label{eq:rrk_residual}
 \|\bvec{r}_m\| = \|\bvd - (\bA-\mu \bB)\bvy_m\| = \|\beta\vec{e}_1-(\Ku_m-\mu\Hu_m)\vy_m^\red\|,
\end{equation}
$\beta=\|\bvd\|$, whereas with left preconditioning one obtains a preconditioned residual norm. On the other hand, the matrix $\bmZ_m$ needs to be stored in addition to $\bmV_{m+1}$.

\subsection{Greedy selection of shifts in rational Krylov}\label{ssec:greedy-rk} 

The distribution of the shifts in rational Krylov relative to the values of $\mu$ for which one wants to solve the system determines the effectiveness of the approach. It can generally be expected that the introduction of a shift $\sigma_k$ at iteration $k$ is most beneficial for nearby values of $\mu$. Logically, one may choose the next shift where the error is currently large.

One of the compelling features of rational Krylov is that a cheap estimate of the residual norm is available for all values of $\mu$, precisely because one can solve a small least-squares system. That system involves an upper Hessenberg matrix $\Ku_k-\mu\Hu_k$, which can be reduced to upper triangular form in $\mathcal{O}(k^2)$ operations. A practical strategy is to choose a finite subset $\hat{S} \subset S$ of values for $\mu$ in its range of interest. A straightforward greedy approach consists in choosing the next shift as the maximizer of the residual norm over $\hat{S}$. For right and left-preconditioned RK-GMRES this means
\begin{equation*}
\sigma_{k+1} = \argmax_{\mu\in \hat{S}}{\|\bvec{r}_k(\mu)\|}, \quad \mbox{or} \quad \sigma_{k+1} = \argmax_{\mu\in \hat{S}}{\|\bvec{r}_k^{\sigma}(\mu)\|}, \quad\text{respectively.}
\end{equation*}
All that remains is to select an initial shift $\sigma_1$ and, for LRK-GMRES, also $\sigma_0$.

Such a greedy scheme was recently investigated in \cite{DaasPalitta2026}, in which shifts are referred to as `poles'. It was compared with the so-called ADM shift selection strategy of \cite{CasulliRobol2024}, which is based on a representation of the residual norm $\|\bvec{r}_k(\mu)\|$ as a rational function in $\mu$. It was found that the simple greedy strategy performs significantly better.

Our greedy scheme differs in two ways from that of \cite{DaasPalitta2026}. First, in the setting of parametrized linear systems we are primarily interested in the residual of the parametrized system, not of its linearization. The second difference is that we use inexact iterative solvers at each shift, which means that the computed residuals no longer match the true ones. These are the topics of the next two sections.

\section{Compact rational Krylov for parametrized linear systems}\label{sec:3_cork}
We move from rational Krylov as a means to solve families of shifted linear systems to its application for the solution of parametrized linear systems~\cref{eq:parametrized_linear_system}. The exposition is based on the original paper \cite{VanBeeumen2015}, in which compact rational Krylov (CORK) was proposed for nonlinear eigenvalue problems. As mentioned before, other methods have been described using linearization techniques for the solution of parametrized linear systems, see \cite{JarlebringCorrenty2022,Correnty2023,Correnty2024}. In comparison to those, CORK accommodates general companion linearizations rather than specific choices. It also supports the use of multiple shifts to steer convergence, permitting a greedy selection strategy.

\subsection{Linearization and left-preconditioned CORK-GMRES}\label{ssec:left-cork}

For CORK, it is assumed that $\mP(\mu)$ takes the form
\begin{equation}\label{eq:matrix-function-add}
    \mP(\mu) = \sum_{i=1}^d (\mA_i-\mu \mB_i)f_i(\mu), \qquad \mA_i,\mB_i\in\C^{n\times n}, \quad f_i: \C \to \C.
\end{equation}
where (rational or polynomial) basis functions $\vf(\mu)=\trns{[f_1(\mu) \,\cdots\, f_d(\mu)]}$ satisfy the linear relationship
\begin{equation}\label{eq:linear-relation}
    (\mMN{\mu})\vf(\mu) = \vec{0}.
\end{equation}
with $\mM,\mN\in\C^{(d-1)\times d}$ and $\rank{(\mMN{\mu})} = d-1$. We refer to \cite[Table 1]{VanBeeumen2015} for several examples. 

In practice, $\mP$ is often given in the form $\mP(\mu) = \sum_{i=1}^s \mat{C}_i g_i(\mu)$ from which the representation \cref{eq:matrix-function-add} can be derived. This may involve approximating $\{g_i\}_{i=1}^s$ by polynomials or rational functions, e.g.,\ using AAA~\cite{LietaertEtAl2022,GuttelEtAl2024}. Matrix operations involving $\{\mA_i\}_{i=1}^d$ and $\{\mB_i\}_{i=1}^d$ can also be expressed in terms of $\{\mat{C}_i\}_{i=1}^s$ instead. This is typically computationally advantageous because application-specific structure can be exploited or because $s\ll d$.

Linearization of $\mP$ leads to a larger matrix pencil $\bAB{\mu} \in \C^{dn\times dn}$,
with
\begin{equation}\label{eq:linearization}
    \bA = \left[\begin{array}{cccc}
        \mA_1 & \mA_2 & \cdots & \mA_d \\
        \noalign{\vskip 2pt} \hline \noalign{\vskip 2pt}
        \multicolumn{4}{c}{\mM \kron \Id_n }
    \end{array}\right],\quad\quad
    \bB = \left[\begin{array}{cccc}
        \mB_1 & \mB_2 & \cdots & \mB_d \\
        \noalign{\vskip 2pt} \hline \noalign{\vskip 2pt}
        \multicolumn{4}{c}{\mN \kron \Id_n }
    \end{array}\right].
\end{equation}
The following theorem shows the connection between the shifted linear system $\bA-\mu\bB$ and the parametrized linear system $\mP(\mu)\vx=\vb$ (with constant right-hand side). A more general, but also more abstract, statement was given in \cite[Theorem 4.1]{GrammontEtAl2011}. 
\begin{theorem}[Linearization]\label{thm:linearization}
    Let $\mP(\mu)$ be given by \cref{eq:matrix-function-add} with \cref{eq:linear-relation}, and define $\bA - \mu \bB$ by \cref{eq:linearization}. Then, $\vx\in\C^n$ is the unique solution to $\mP(\mu)\vx=\vb$, $\vb\in\C^n$, if and only if $\bvy$ is the unique solution to $(\bA-\mu\bB)\bvy=\bvd$ of the form
    \begin{equation*}
        \bvy = \vf(\mu) \kron \vx \in \C^{dn}
    \end{equation*}
    with $\bvd = [\trns{\vb} \; \vec{0}]^\tr = \vec{e}_1 \kron \vb$.
\end{theorem}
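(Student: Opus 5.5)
The plan is to verify the block identity directly. Then I handle uniqueness via a kernel argument that uses the rank condition on $\mMN{\mu}$.

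\emph{Step 1 (structured vectors solve the linearized system).} Take $\bvy = \vf(\mu)\kron\vx$. The first block row of $(\bA-\mu\bB)\bvy$ is
\[
\sum_{i=1}^d (\mA_i-\mu\mB_i)f_i(\mu)\vx = \mP(\mu)\vx .
\]
The remaining block rows equal
\[
\bigl((\mMN{\mu})\kron\Id_n\bigr)(\vf(\mu)\kron\vx) = \bigl((\mMN{\mu})\vf(\mu)\bigr)\kron\vx = \vec{0},
\]
by the mixed-product property and \cref{eq:linear-relation}. Hence $(\bA-\mu\bB)(\vf(\mu)\kron\vx)=\vec{e}_1\kron\bigl(\mP(\mu)\vx\bigr)$. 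In particular, $\mP(\mu)\vx=\vb$ holds if and only if $(\bA-\mu\bB)(\vf(\mu)\kron\vx)=\bvd$.

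\emph{Step 2 (every solution of the linearized system has the structured form).} Suppose $(\bA-\mu\bB)\bvy=\bvd$. Write $\bvy=\mathrm{vec}(\mat{Y})$ with $\mat{Y}\in\C^{n\times d}$, so that $\bvy=\sum_i \vec{e}_i\kron \vec{y}_i$. The lower block rows give $(\mMN{\mu})\kron\Id_n\,\bvy=\vec{0}$, i.e.\ $\mat{Y}\trns{(\mMN{\mu})}=0$. Every row of $\mat{Y}$ therefore lies in the null space of $\mMN{\mu}$. Because $\rank(\mMN{\mu})=d-1$, that null space is one-dimensional, and it is spanned by $\vf(\mu)$ provided $\vf(\mu)\neq\vec{0}$. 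So $\mat{Y}=\vx\trns{\vf(\mu)}$ for some $\vx$, that is, $\bvy=\vf(\mu)\kron\vx$. The first block row then gives $\mP(\mu)\vx=\vb$ by Step 1. This is the main obstacle: it needs $\vf(\mu)\ne\vec{0}$, which I take as the standing assumption implicit in the CORK framework (typically $f_1\equiv1$ or some normalization). If $\vf(\mu)=\vec{0}$, then $\mP(\mu)=0$ and there is no unique solution on either side, so the statement is vacuous in that case.

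\emph{Step 3 (equivalence of uniqueness).} Apply the same argument to the homogeneous problem. Steps 1 and 2 show that the map $\vx\mapsto\vf(\mu)\kron\vx$ is a bijection from $\ker\mP(\mu)$ onto $\ker(\bA-\mu\bB)$; it is injective because $\vf(\mu)\neq\vec{0}$. Hence $\mP(\mu)$ is nonsingular exactly when $\bA-\mu\bB$ is.

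Combining the steps: if $\vx$ is the unique solution of $\mP(\mu)\vx=\vb$, then $\bA-\mu\bB$ is nonsingular and $\vf(\mu)\kron\vx$ is its unique solution. Conversely, if $\bvy$ is the unique solution of $(\bA-\mu\bB)\bvy=\bvd$, it has the form $\vf(\mu)\kron\vx$ by Step 2, $\vx$ solves the parametrized system, and $\vx$ is unique by Step 3.
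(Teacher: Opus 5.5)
Your proof is correct and is essentially the paper's proof: the lower block rows force $\bvy=\vf(\mu)\kron\vx$ via the one-dimensional null space of $\mMN{\mu}$, the first block row then gives $\mP(\mu)\vx=\vb$, and the converse is direct multiplication. Your kernel bijection in Step~3 usefully makes explicit two things the paper uses only tacitly: the transfer of uniqueness, and the hypothesis $\vf(\mu)\neq\vec{0}$.

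One correction to your aside: that hypothesis is genuinely needed, and the degenerate case is not vacuous. Take $\vf(\mu)=[\mu\;\,\mu^2]^\tr$, $\mM=[0\;\,1]$, $\mN=[1\;\,0]$, $\mu=0$, $\vb=\vec{0}$ and $\mA_1$ invertible. Then $\bA-\mu\bB$ is nonsingular, and its unique solution $\vec{0}=\vf(0)\kron\vx$ has the required form for every $\vx$, while $\mP(0)=0$, so the equivalence fails.
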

\begin{proof}
The last $(d-1)$ block rows of size $n$ of $\bA-\mu\bB$ provide the underdetermined system $\left((\mMN{\mu})\kron \Id_n\right) \bvy = \vec{0}$. Defining $\mat{Y}:=[\vy^{[1]} \; \vy^{[2]} \,\cdots\, \vy^{[d]}]\in\C^{n\times d}$, consisting of blocks $\vy^{[i]}\in\C^n$ of $\bvy$, this is equivalent to 
\begin{equation*}
    (\mMN{\mu})\trns{\mat{Y}} = \vec{0}_{(d-1)\times n}
\end{equation*}
This enforces the linear relationship \cref{eq:linear-relation} onto $\trns{\mat{Y}}$ s.t.\ $\trns{\mat{Y}}=\vf(\mu)\trns{\vz}$, i.e., $\bvy=\vf(\mu)\kron \vz$, for some $\vz\in\C^n$. Inserting this form into the first block row of $\bA-\mu\bB$ gives
\begin{equation*}
    \left[\begin{array}{ccc}
        \mA_1-\mu \mB_1 & \cdots & \mA_d-\mu \mB_d
    \end{array}\right]\left[\begin{array}{c}
        \vf_1(\mu)\vz \\
        \vdots \\
        \vf_d(\mu)\vz
    \end{array}\right]=\left(\sum_{i=1}^d (\mA_i-\mu \mB_i)\vf_i(\mu)\right)\vz = \mP(\mu)\vz = \vb
\end{equation*}
and thus $\vz=\vx$, the unique solution to the original parametrized linear system. The converse is easily seen by multiplying $\bA-\mu\bB$ with $\vf(\mu)\kron\vx$, with $\vx$ the unique solution to $\mP(\mu)\vx=\vb$.
\end{proof}
\Cref{thm:linearization} reveals that by using the pencil $(\bA,\bB)$, together with the appropriate right-hand side, we can employ methods for shifted linear systems to solve the parametrized linear systems \cref{eq:parametrized_linear_system}. We shall consider variable right-hand sides in \S\ref{ssec:cork-rhs}.

For nonlinear eigenvalue problems~\cite{VanBeeumen2015} CORK finds eigenpairs by applying the rational Krylov method \cite{Ruhe1984,Ruhe1998} on the companion pencil $(\bA,\bB)$. The CORK framework exploits the Kronecker structure present in the large pencil to efficiently employ Krylov methods. See \cite{VanBeeumen2015,Bruyninckx2026} for more details. Because CORK uses the rational Krylov subspace \cref{eq:ratkrylov-subspace}, it corresponds to LRK-GMRES from \S\ref{ssec:left-rk-gmres}. We will further refer to it as left-preconditioned CORK-GMRES (LCORK-GMRES).

\subsection{Right-preconditioned CORK-GMRES (RCORK-GMRES)}\label{ssec:right-cork}
In this setting we mimick the strategy of CORK and aim to exploit the Kronecker structure in the companion pencil for right preconditioning as well. For the sake of conciseness, the structure of this subsection parallels that of section 4 in \cite{VanBeeumen2015}. We also refer to \cite{Bruyninckx2026}, which includes an explicit overview of steps that are only implicit in literature as well as some material omitted here for brevity.

We start with restating Theorem 2.3 from \cite{VanBeeumen2015}.
\begin{theorem}[Block ULP decomposition, {\cite[Theorem 2.3]{VanBeeumen2015}}]\label{thm:ULP}
Let $(\bA,\bB)$ be the linearization \cref{eq:linearization}, satisfying \cref{eq:linear-relation}. For every $\mu\in\C$ there exists a permutation $\cP\in\C^{d\times d}$ such that matrix $\mM_2-\mu \mN_2\in\C^{(d-1)\times(d-1)}$ is invertible with
\begin{equation*}
    \mM =: \left[\begin{array}{cc}
        \vec{m}_1 & \mM_2
    \end{array}\right]\cP,\quad\quad
    \mN =: \left[\begin{array}{cc}
        \vec{n}_1 & \mN_2
    \end{array}\right]\cP.
\end{equation*}
Moreover, pencil $\bmat{L}(\mu):=\bA - \mu \bB$ can be factorized as follows:
\begin{equation*}
    \bmat{L}(\mu) = \cU\cL(\cP\kron I_n)
\end{equation*}
where
\begin{align*}
    \cL &= \left[\begin{array}{cc}
        \mP(\mu) & \mat{0} \\
        (\vec{m}_1-\mu \vec{n}_1)\kron \Id_n & (\mM_2-\mu \mN_2)\kron \Id_n 
    \end{array}\right],\\
    \cU &= \left[\begin{array}{cc}
        \alpha^{\minone} \Id_n & (\bar{\bA}_2-\mu\bar{\bB}_2)\left((\mM_2-\mu \mN_2)^{\minone}\kron \Id_n\right)  \\
        \mat{0} & \Id_{(d-1)n}
    \end{array}\right]
\end{align*}
with the scalar $\alpha=\trns{\vec{e}}_1\cP \vf(\mu)\ne0$ and
\begin{align*}
    \bar{\bA} := \left[\begin{array}{cccc}
        \mA_1 & \mA_2 & \cdots & \mA_d 
    \end{array}\right] &=: 
        \left[\begin{array}{cc}
            \bar{\mA}_1 & \bar{\bA}_2
        \end{array}\right](\cP\kron I_n),\\
    \bar{\bB} := \left[\begin{array}{cccc}
        \mB_1 & \mB_2 & \cdots & \mB_d 
    \end{array}\right] &=:
        \left[\begin{array}{cc}
            \bar{\mB}_1 & \bar{\bB}_2
        \end{array}\right](\cP\kron I_n).
\end{align*}
\end{theorem}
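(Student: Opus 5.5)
Two things must be shown: the existence of a permutation $\cP$ with $\mM_2-\mu\mN_2$ invertible and $\alpha\neq0$, and the factorization identity itself, which I will verify by multiplying out $\cU\cL$ blockwise.

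\emph{Existence of $\cP$.} Since $\rank(\mMN{\mu})=d-1$, the $(d-1)\times d$ matrix $\mMN{\mu}$ has $d-1$ linearly independent columns. Choose $\cP$ so that, after reordering, these columns form the trailing block. Then $\mMN{\mu}=[\vec{m}_1-\mu\vec{n}_1\;\;\mM_2-\mu\mN_2]\cP$ with $\mM_2-\mu\mN_2$ invertible.

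\emph{Nonvanishing of $\alpha$.} Write $\cP\vf(\mu)=[\alpha;\,\vec{g}]$. The relation \cref{eq:linear-relation} gives
\begin{equation*}
(\vec{m}_1-\mu\vec{n}_1)\alpha+(\mM_2-\mu\mN_2)\vec{g}=0,
\end{equation*}
so $\vec{g}=-\alpha(\mM_2-\mu\mN_2)^{\minone}(\vec{m}_1-\mu\vec{n}_1)$. If $\alpha=0$, then $\vec g=0$ and hence $\vf(\mu)=0$. This is excluded by the standing assumption that $\vf(\mu)$ is a nontrivial basis vector spanning the null space. I expect this to be the only delicate point, and I will state that assumption explicitly.

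\emph{Verifying the product.} Write $\bmat{L}(\mu)(\cP\kron\Id_n)^{\minone}$ in block form. Since $\cP$ is a permutation, $(\cP\kron\Id_n)^{\minone}=\trns{\cP}\kron\Id_n$. The top block row is $[\bar{\mA}_1-\mu\bar{\mB}_1\;\;\bar{\bA}_2-\mu\bar{\bB}_2]$. The bottom block row is $[(\vec{m}_1-\mu\vec{n}_1)\kron\Id_n\;\;(\mM_2-\mu\mN_2)\kron\Id_n]$. The bottom row of $\cU\cL$ is the bottom row of $\cL$, which matches.

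For the top row, the $(1,2)$ block of $\cU\cL$ is
\begin{equation*}
(\bar{\bA}_2-\mu\bar{\bB}_2)\big((\mM_2-\mu\mN_2)^{\minone}(\mM_2-\mu\mN_2)\kron\Id_n\big)=\bar{\bA}_2-\mu\bar{\bB}_2,
\end{equation*}
as required. The $(1,1)$ block is
\begin{equation*}
\alpha^{\minone}\mP(\mu)+(\bar{\bA}_2-\mu\bar{\bB}_2)\big((\mM_2-\mu\mN_2)^{\minone}(\vec{m}_1-\mu\vec{n}_1)\kron\Id_n\big).
\end{equation*}
By the expression for $\vec g$, the second term equals $-\alpha^{\minone}(\bar{\bA}_2-\mu\bar{\bB}_2)(\vec g\kron\Id_n)$.

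Next expand $\mP(\mu)$ in the permuted ordering:
\begin{equation*}
\mP(\mu)=[\bar{\mA}_1-\mu\bar{\mB}_1\;\;\bar{\bA}_2-\mu\bar{\bB}_2](\cP\vf(\mu)\kron\Id_n)=\alpha(\bar{\mA}_1-\mu\bar{\mB}_1)+(\bar{\bA}_2-\mu\bar{\bB}_2)(\vec g\kron\Id_n).
\end{equation*}
Substituting this into the $(1,1)$ block, the $\vec g$-terms cancel and exactly $\bar{\mA}_1-\mu\bar{\mB}_1$ remains. This completes the identity $\bmat{L}(\mu)=\cU\cL(\cP\kron\Id_n)$.
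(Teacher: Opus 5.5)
Your proof is correct. The present paper gives no proof of its own and defers to \cite{VanBeeumen2015}, and your argument is the standard direct verification of this factorization: choose $\cP$ from the rank condition, derive $\alpha\neq0$ from $(\mM-\mu\mN)\vf(\mu)=\vec{0}$, and check $\cU\cL=\bmat{L}(\mu)(\cP\kron\Id_n)^{\minone}$ blockwise. The assumption $\vf(\mu)\neq\vec{0}$ that you flag is genuinely needed, since without it $\alpha\neq0$ can fail; it is implicit in the paper and is guaranteed, for instance, by the choice $f_1\equiv1$ in \S\ref{ssec:bem-cork}.
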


In addition to the pencil having block structure, so do the matrices $\bmV_{m+1}$ and, in the right-preconditioned case, $\bmZ_{m}$. With blocks $\mV_m^{[i]}\in\C^{n\times m}$ and $\vv_{m+1}^{[i]}\in\C^n$, for $i=1,\dots,d$, we write $\bmV_{m+1}\in\C^{dn\times(m+1)}$ as
\begin{equation*}
    \bvec{V}_{m+1} = \left[\begin{array}{cc}
        \bm{\mathsf{V}}_m & \bm{\mathsf{v}}_{m+1}
    \end{array}\right] = 
        \left[\begin{array}{cc}
            \mathsf{V}_m^{[1]} & \mathsf{v}_{m+1}^{[1]} \\
            \vdots & \vdots \\
            \mathsf{V}_m^{[d]} & \mathsf{v}_{m+1}^{[d]}
        \end{array}\right].
\end{equation*}
We consider a similar subdivision for $\bmZ_m\in\C^{dn\times m}$ into $\mZ_m^{[i]}\in\C^{n\times m}$ ($i=1,\dots,d$). 
\begin{definition}\label{def:rcork-Q}
    The matrix $\mQ_m\in\C^{n\times r_m}$ is an orthogonal matrix that spans the $r_m$-dimensional column space of $[ \mV_m^{[1]} \; \cdots \; \mV_m^{[d]} \; \mZ_{m-1}^{[1]} \; \cdots \; \mZ_{m-1}^{[d]} ]$.
\end{definition}
Using \cref{def:rcork-Q}, $\bmV_{m+1}$ and $\bmZ_m$ can be expressed as 
\begin{equation*}
    \bmV_{m+1} = (\Id_d \kron \mQ_{m+1})\bmU_{m+1},\quad \bmZ_m = (\Id_d \kron \mQ_{m+1})\bmS_m,
\end{equation*}
\begin{equation*}
    \text{with}\qquad \bmU_m = \left[\begin{array}{c} \mU_m^{[1]} \\ \vdots \\ \mU_m^{[d]} \end{array}\right] \in \C^{dr_m\times m},\quad \bmS_m = \left[\begin{array}{c} \mS_m^{[1]} \\ \vdots \\ \mS_m^{[d]} \end{array}\right] \in \C^{dr_{m+1}\times m}.
\end{equation*}
This leads to the introduction of the \emph{CORK quintuple}; compare with the CORK quadruple in \cite[Definition 4.2]{VanBeeumen2015}.
\begin{definition}[CORK quintuple]\label{def:cork-quintuple}
    The quintuple $(\mQ_{m+1},\bmU_{m+1},\bmS_m,\Hu_m,\Ku_m)$ with $\mQ_{m+1}\in\C^{n\times r_{m+1}}$, $\bmU_{m+1}\in\C^{dr_{m+1}\times (m+1)}$, $\bmS_m\in\C^{dr_{m+1}\times m}$ and $\Hu_m,\Ku_m\in\C^{(m+1)\times m}$ is called a CORK quintuple of order $m$ for $(\bA,\bB)$, defined by \cref{eq:linearization}, if
    \begin{enumerate}[noitemsep,nolistsep]
        \item it satisfies the CORK recurrence relation $\forall \mu\in\C$,
        \begin{equation*}
            (\bA-\mu \bB)(\Id_d\kron \mQ_{m+1})\bmS_m = (\Id_d\kron \mQ_{m+1})\bmU_{m+1}(\Ku_m-\mu\Hu_m),
        \end{equation*}
        \item $\mQ_{m+1}$ and $\bmU_{m+1}$ are orthogonal and $\mQ_{m+1}$ has full rank, and
        \item $\Ku_m$ and $\Hu_m$ are upper Hessenberg matrices with $\Hu_m$ unreduced.
    \end{enumerate}
\end{definition}
Through additional theorems we show that this representation is compact in the sense that its storage scales as $\mathcal{O}(nm+dm^2)$ instead of $\mathcal{O}(dnm)$. The compact block structure is inherited from that of the solution of the shifted system.

\begin{lemma}\label{lem:system-solve-ulp}
    Let $\bA$ and $\bB$ be defined by \cref{eq:linearization}. The solutions $\bvz$ and $\bvx$ of
    \begin{equation*}
        (\bA-\sigma\bB)\bvz = \bvy, \quad\quad \bvx = \bB\bvz.
    \end{equation*}
    have the following block structure. One block, $\vz^{[p]}$, corresponds to a system solve with $\mP(\sigma)$. The other blocks of $\bvz$ are linear combinations of $\vz^{[p]}$ and the blocks of $\bvy$. The first block of $\bvx$, $\vx^{[1]}$, is obtained by a matrix-vector product between $\bar{\bB}$ and $\bvz$, and the other blocks of $\bvx$ are linear combinations of the blocks of $\bvz$.
\end{lemma}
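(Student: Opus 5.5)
The plan is to invert the block ULP factorization of \cref{thm:ULP} at $\mu=\sigma$, i.e.\ to solve $\bmat{L}(\sigma)\bvz=\bvy$ through $\cU\cL(\cP\kron\Id_n)\bvz=\bvy$, and then to read off the structure of $\bvx=\bB\bvz$ directly from \cref{eq:linearization}. We work with the permuted unknown $\tilde{\bvz}:=(\cP\kron\Id_n)\bvz$. Its first block is $\vz^{[p]}$, where $p$ is the index singled out by the permutation $\cP$, and its remaining $d-1$ blocks form $\tilde{\bvz}_2$. We split $\bvy=[\trns{(\vy^{[1]})}\;\trns{\bvy_2}]^\tr$ conformally.

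\emph{Step 1: inverting $\cU$.} The matrix $\cU$ is block upper triangular with identity in the lower right block, so $\bvw:=\cU^{\minone}\bvy$ is explicit. Its lower part is $\bvw_2=\bvy_2$. Its first block is
\begin{equation*}
\vec{w}_1=\alpha\left(\vy^{[1]}-(\bar{\bA}_2-\sigma\bar{\bB}_2)\big((\mM_2-\sigma\mN_2)^{\minone}\kron\Id_n\big)\bvy_2\right).
\end{equation*}
This requires matrix-vector products with the coefficient matrices $\mA_i,\mB_i$, or equivalently with the $\mat{C}_i$, but no system solves.

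\emph{Step 2: inverting $\cL$.} The matrix $\cL$ is block lower triangular. The first block row gives $\mP(\sigma)\vz^{[p]}=\vec{w}_1$, which is the single solve with $\mP(\sigma)$; it is well posed because $\sigma$ is assumed not to be an eigenvalue. The second block row gives
\begin{equation*}
\tilde{\bvz}_2=\big((\mM_2-\sigma\mN_2)^{\minone}\kron\Id_n\big)\left(\bvy_2-\big((\vec{m}_1-\sigma\vec{n}_1)\kron\Id_n\big)\vz^{[p]}\right).
\end{equation*}
Because the matrix acting here is a small $(d-1)\times(d-1)$ matrix Kronecker $\Id_n$, each block of $\tilde{\bvz}_2$ is a scalar linear combination of $\vz^{[p]}$ and the blocks $\vy^{[2]},\dots,\vy^{[d]}$. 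Undoing the permutation then yields the claimed structure of $\bvz$.

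\emph{Step 3: structure of $\bvx$.} From \cref{eq:linearization}, the first block is $\vx^{[1]}=\bar{\bB}\bvz=\sum_i\mB_i\vz^{[i]}$, which is a matrix-vector product. The remaining blocks are $(\mN\kron\Id_n)\bvz$, so each is a scalar combination of the blocks of $\bvz$.

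The only delicate point is Step 1. One must check that the coupling term from $\cU$ feeds only into the right-hand side of the $\mP(\sigma)$ solve. This means $\vz^{[p]}$ genuinely depends on all blocks of $\bvy$ through matrix-vector products, whereas the other blocks of $\bvz$ depend on $\bvy$ only through scalar combinations. I would verify this by confirming that $\alpha\ne0$ and that $\mM_2-\sigma\mN_2$ is invertible for the chosen $\cP$, both of which \cref{thm:ULP} guarantees.
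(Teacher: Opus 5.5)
Your proposal is correct and follows the same route as the paper. The paper handles $\bvz$ by inverting the block ULP factorization of $\bA-\sigma\bB$, deferring to the proof of Lemma~4.3 in \cite{VanBeeumen2015}, which is exactly your Steps~1--2 (your formulas for $\cU^{\minone}\bvy$ and the back-substitution through $\cL$ match those in the proof of \cref{prop:inexact-cork}), and it reads off $\bvx$ from the block structure of $\bB$, as in your Step~3.
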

\begin{proof}
    The part on $\bvz$ follows from the block ULP decomposition of $\bmat{L}(\sigma) = \bA - \sigma \bB$, see the proof of Lemma 4.3 in \cite{VanBeeumen2015}. The part on $\bvx$ is clear from the structure of $\bB$.
\end{proof}
\begin{theorem}\label{thm:span-of-Q}
Let $\mQ_m$ be defined by \cref{def:rcork-Q}. Then,
\begin{equation*}
    \vspan{\{\mQ_{m+1}\}} = \vspan{\{\mQ_m,\vz_{m}^{[p]},\vv_{m+1}^{[1]}\}},
\end{equation*}
where $\vz_m^{[p]}$ is the $p$th block in \Cref{lem:system-solve-ulp}. In addition, if $\exists \vec{c}_\mB \in \C^d$ s.t.\ $\bar{\bB}=\vec{c}_\mB^\tr\kron\Id_n$,
\begin{equation*}
    \vspan{\{\mQ_{m+1}\}} = \vspan{\{\mQ_m,\vz_{m}^{[p]}\}}.
\end{equation*}
\end{theorem}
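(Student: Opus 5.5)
The plan is to follow one step of the right-preconditioned iteration of \S\ref{ssec:right-rk-gmres} blockwise and track which new vectors enter the column space in \cref{def:rcork-Q}. The step is assumed to be organized as follows: the new flexible direction is $\bvz_m=(\bA-\sigma_m\bB)^{\minone}\bvv_m$, and the new basis vector is
\begin{equation*}
\bvv_{m+1}=\bigl(\bB\bvz_m-\bmV_m\vec{h}\bigr)/h_{m+1,m},
\end{equation*}
with $h_{m+1,m}\neq0$ because $\Hu_m$ is unreduced. This is consistent with $\bB\bmZ_m=\bmV_{m+1}\Hu_m$.

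By \cref{def:rcork-Q}, $\vspan\{\mQ_{m+1}\}$ is spanned by the blocks of $\bmV_m$ and $\bmZ_{m-1}$, together with the blocks of $\bvv_{m+1}$ and $\bvz_m$. The blocks of $\bmV_m$ and $\bmZ_{m-1}$ span exactly $\vspan\{\mQ_m\}$. The inclusion "$\supseteq$" in the first claim is therefore immediate, since $\vz_m^{[p]}$ is a block of $\bmZ_m$ and $\vv_{m+1}^{[1]}$ is a block of $\bmV_{m+1}$. For "$\subseteq$", it remains to show that every block of $\bvz_m$ and of $\bvv_{m+1}$ lies in $\mathcal{W}:=\vspan\{\mQ_m,\vz_m^{[p]},\vv_{m+1}^{[1]}\}$.

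First apply \Cref{lem:system-solve-ulp} with $\bvy=\bvv_m$ and $\sigma=\sigma_m$. Every block of $\bvz_m$ other than $\vz_m^{[p]}$ is a linear combination of $\vz_m^{[p]}$ and the blocks $\vv_m^{[i]}$. Those blocks are columns of $\mV_m^{[i]}$ and hence lie in $\vspan\{\mQ_m\}$. So all blocks of $\bvz_m$ lie in $\vspan\{\mQ_m,\vz_m^{[p]}\}$.

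Next set $\bvx=\bB\bvz_m$. By the same lemma, the blocks $\vx^{[i]}$ for $i\ge2$ are linear combinations of blocks of $\bvz_m$, so they also lie in $\vspan\{\mQ_m,\vz_m^{[p]}\}$. Subtracting $\bmV_m\vec h$ changes each block only by an element of $\vspan\{\mQ_m\}$, and dividing by $h_{m+1,m}\neq0$ preserves spans. Hence $\vv_{m+1}^{[i]}\in\vspan\{\mQ_m,\vz_m^{[p]}\}$ for $i\ge2$. The only block not controlled in this way is $\vv_{m+1}^{[1]}$, which is why it appears explicitly in the statement. Together these give the first equality.

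For the second claim, suppose $\bar\bB=\vec c_\mB^\tr\kron\Id_n$. Then $\vx^{[1]}=\sum_i (\vec c_\mB)_i\,\vz_m^{[i]}$, which lies in $\vspan\{\mQ_m,\vz_m^{[p]}\}$ by the previous step. Therefore
\begin{equation*}
\vv_{m+1}^{[1]}=\bigl(\vx^{[1]}-\mV_m^{[1]}\vec h\bigr)/h_{m+1,m}
\end{equation*}
lies there as well, and this vector can be dropped from the spanning set.

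The main obstacle is not computational. It is pinning down precisely which quantities are generated at step $m$, namely that $\bvz_m$ solves a system with right-hand side $\bvv_m$ and that the index shift between $\bmZ_{m-1}$ in $\mQ_m$ and $\bmZ_m$ in $\mQ_{m+1}$ is handled correctly. Continuation combinations, i.e.\ solving with a combination of columns of $\bmV_m$ instead of $\bvv_m$, need a separate check. They do not change the argument, because any such combination still has all its blocks in $\vspan\{\mQ_m\}$.
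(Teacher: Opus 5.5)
Your proof is correct and follows the same route as the paper's. You split $\vspan\{\mQ_{m+1}\}$ into $\vspan\{\mQ_m\}$ plus the new blocks of $\bvv_{m+1}$ and $\bvz_m$, use \Cref{lem:system-solve-ulp} to reduce every block of $\bvz_m$ to $\vz_m^{[p]}$ modulo $\vspan\{\mQ_m\}$ and every block of $\bvv_{m+1}$ except the first to blocks of $\bvz_m$, and then use $\bar{\bB}=\vec{c}_\mB^\tr\kron\Id_n$ to absorb $\vv_{m+1}^{[1]}$. You are more explicit than the paper about the orthogonalization step (subtracting $\bmV_m\vec{h}$, dividing by $h_{m+1,m}\neq0$) and about continuation vectors, both of which the paper leaves implicit.
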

\begin{proof}
Using \cref{lem:system-solve-ulp} with $\sigma=\sigma_m$, we have
\begin{align*}
    \vspan{\{\mQ_{m+1}\}} &= \vspan{\left\{ \mV_{m+1}^{[1]},\; \dots,\; \mV_{m+1}^{[d]},\; \mZ_m^{[1]},\; \dots,\; \mZ_m^{[d]} \right\}} \\
        &= \vspan{\left\{ \mQ_m,\; \vv_{m+1}^{[1]},\; \dots,\; \vv_{m+1}^{[d]},\; \vz_m^{[1]},\; \dots,\; \vz_m^{[d]} \right\}} \\
        &= \vspan{\left\{ \mQ_m,\; \vv_{m+1}^{[1]},\; \vz_m^{[p]} \right\}}.
\end{align*}    
Clearly, when $\bar{\bB}=\vec{c}_\mB^\tr\kron\Id_n$ in \cref{lem:system-solve-ulp}, $\vx^{[1]}$ is also a linear combination of blocks of $\bvz$. Thus $\vv_{m+1}^{[1]} \in \vspan{\{\mQ_m,\vz_{m}^{[p]}\}}$.
\end{proof}
\begin{theorem}
    For $\mQ_m$ of \cref{def:rcork-Q} and $\tilde{d} = \mathrm{dim}(\vspan{\{\vv_1^{[1]},\dots,\vv_1^{[d]}\}})$,
    \begin{equation}\label{eq:inequality_rank_Qm}
        r_m \le \tilde{d} + c(m-1),
    \end{equation}
    where $c=2$ in general and $c=1$ if $\exists\vec{c}_{\mB}\in\C^d$ such that $\bar{\bB}=\vec{c}_{\mB}^\tr\kron\Id_n$.
\end{theorem}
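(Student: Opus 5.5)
We argue by induction on $m$, using \cref{thm:span-of-Q} as the inductive step. The two parts of that theorem give exactly the increments $c=2$ and $c=1$ needed in \cref{eq:inequality_rank_Qm}.

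For the base case $m=1$, \cref{def:rcork-Q} says that $\mQ_1$ spans the column space of $[\mV_1^{[1]}\;\cdots\;\mV_1^{[d]}]$. The matrix $\mZ_0$ is empty, so these are just the blocks $\vv_1^{[1]},\dots,\vv_1^{[d]}$ of the starting vector. Hence $r_1 = \tilde{d}$, which is the bound with $m=1$. It is worth noting that for the right-preconditioned variant, $\bvv_1=\bvd/\|\bvd\|$ with $\bvd=\vec{e}_1\kron\vb$, so $\tilde d=1$. The statement, however, only needs the general $\tilde d$.

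For the inductive step, assume $r_m\le \tilde d + c(m-1)$. By \cref{thm:span-of-Q}, $\vspan\{\mQ_{m+1}\}=\vspan\{\mQ_m,\vz_m^{[p]},\vv_{m+1}^{[1]}\}$. Adjoining two vectors to a subspace raises its dimension by at most two, so $r_{m+1}\le r_m+2$. In the special case $\bar{\bB}=\vec{c}_\mB^\tr\kron\Id_n$, the second part of \cref{thm:span-of-Q} gives $\vspan\{\mQ_{m+1}\}=\vspan\{\mQ_m,\vz_m^{[p]}\}$, hence $r_{m+1}\le r_m+1$. In both cases $r_{m+1}\le r_m+c\le\tilde d + cm$, which closes the induction.

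The only point requiring care is the indexing in \cref{def:rcork-Q}. One must check that passing from $\mQ_m$ to $\mQ_{m+1}$ adds exactly the new columns $\bvv_{m+1}$ and $\bvz_m$, and that \cref{thm:span-of-Q} applies for every $m\ge1$, including the first step, where $\mQ_1$ contains no $\mZ$-blocks. Once this is verified, the rest is a dimension count and involves no real obstacle.
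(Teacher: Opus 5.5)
Your proof is correct and takes the same route as the paper. The base case $r_1=\tilde d$ follows directly from \cref{def:rcork-Q}, since there are no $\mZ$-blocks yet, and \cref{thm:span-of-Q} gives the increment $r_{k+1}\le r_k+c$ with $c=2$ in general and $c=1$ when $\bar{\bB}=\vec{c}_\mB^\tr\kron\Id_n$, which closes the induction.
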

\begin{proof}
    By definition, $\vspan{\{\mQ_1\}}=\vspan{\{\vv_1^{[1]},\dots,\vv_1^{[d]}\}}$, i.e. $r_1=\tilde{d}$. Then from \cref{thm:span-of-Q} it is clear that $r_{k+1}\le r_k + c$. By induction the statement is proven.
\end{proof}

When the shifted system $(\bA-\mu\bB)\bvy=\bvd$ is used to solve the parametrized linear system $\mP(\mu)\vx=\vb$, we have $\bvd=\vec{e}_1\kron\vb$ and $\tilde{d}=1$. In general, with $c=2$, RCORK-GMRES requires roughly twice the amount of storage to store its vectors compared to LCORK-GMRES. This is similar to how regular RRK-GMRES requires twice the storage compared to LRK-GMRES by having to store $\bmZ_m$.

In \cite{Bruyninckx2026}, the main author further details how the block ULP decomposition combined with the compact representation of $\bmV_{m+1}$ and $\bmZ_m$ leads to efficient implementations of both LCORK-GMRES and RCORK-GMRES. The computationally intensive parts are the ones that scale with the problem size $n$. They are, in iteration $k$:
\begin{enumerate}
    \item Solving a system involving $\mP(\sigma_k)$ with shift $\sigma_k$.
    \item A matrix-vector product with $(\bar{\bA}_2-\sigma_k\bar{\bB}_2)\in\C^{(d-1)n\times n}$ and one with $\bar{\bB}\in\C^{dn\times n}$. This can be recast to a single matrix-vector product with $\bar{\bmat{C}}:=[\mat{C}_1 \,\cdots\, \mat{C}_s]\in\C^{sn\times n}$ in the case of LCORK-GMRES. RCORK-GMRES requires two separate products with $\bar{\bmat{C}}$, unless $\bar{\bB}=\vec{c}_{\mB}^\tr\kron\Id_n$.
    \item Expanding $\mQ_k$ to $\mQ_{k+1}$ by means of orthogonalization. It is expanded by one vector for LCORK-GMRES and by two for RCORK-GMRES (except when $\bar{\bB}=\vec{c}_{\mB}^\tr\kron\Id_n$).
\end{enumerate}

The cost of the orthogonalization depends on the choice of orthogonalization process, but generally results in $\mathcal{O}(nm^2)$ operations over $m$ iterations. In contrast, the cost of the other operations mostly depends on the form of $\mP(\mu)$. Note that when $\bar{\bB}=\vec{c}_{\mB}^\tr\kron\Id_n$, RCORK-GMRES practically has the same cost as LCORK-GMRES. Earlier, the same was also true for their memory footprint.

\subsection{Parameter dependency of the right-hand side}\label{ssec:cork-rhs}

At first, a parameter-dependent right-hand side $\vb:=\vb(\mu)$ seems like a major obstacle. Krylov methods that rely on the shift-invariance property to solve shifted linear systems are not designed to handle this situation, since there is no single starting vector that is uniformly valid in $\mu$. To the best of our knowledge, this case is not treated in the literature on linearization.
 
Fortunately, there is an original solution to this problem, which is also relatively simple. We can make a companion-like extension of the system to a slightly larger one, with size $(n+1)\times(n+1)$, with a constant right-hand side. To that end, we assume that $\vb$ can be represented with the same basis functions as $\mP$, i.e.,
\begin{equation}\label{eq:cond-param-dep-rhs}
    \vb(\mu) = \sum_{i=1}^d (\vb_{\mA,i} - \mu \vb_{\mB,i}) f_i(\mu).
\end{equation}
We aim to construct an augmented system $\mPu(\mu)\vxu(\mu)=\vbu(\mu)$ of which the right-hand side is only dependent on $\mu$ through a scalar. We do so using a scalar function $\gamma(\mu)=\sum_{i=1}^d (\alpha_i-\mu\beta_i)f_i(\mu)$ and defining
\begin{equation*}
    \mPu(\mu):=\left[\begin{array}{cc}
        \mP(\mu) & -\vb(\mu) \\
        \mat{0}_{1\times n} & \gamma(\mu)
    \end{array}\right], \quad
    \vbu(\mu) := \gamma(\mu)\vec{e}_{n+1}=\left[\begin{array}{c}
        0 \\
        \gamma(\mu)
    \end{array}\right].
\end{equation*}
where $\vxu(\mu) = [\vx(\mu)^\tr \; 1]^\tr$.
The augmented system results in a linearization with the original matrices $\{\mA_i\}_{i=1}^d$ and $\{\mB_i\}_{i=1}^d$ also augmented to slightly larger ones. Importantly, we can perform CORK-GMRES with starting vector $\vec{e}_{n+1}$. The solution to the original system is obtained from the first $n$ elements of the augmented solution, scaled by $\gamma(\mu)$.

Any scalar function $\gamma(\mu)$ of the right form will do, but generally one can choose the coefficients $\alpha_i$ and $\beta_i$ such that $\gamma(\mu)$ is a constant. The choice could impact the stability of CORK. Strictly speaking, we have also changed the meaning of the residual of the system, which is a criterion for the greedy selection of shifts. Still, we found the approach to be very effective in the application of this paper.

\section{Inexact rational Krylov}\label{sec:4_inexact-rk}

Thus far we have assumed that the systems $\mP(\sigma)\vx=\vb$ can be solved exactly. This is often not the case, or it might be impractical, for large-size problems. Instead, iterative methods are used to obtain an approximate solution with a suitably small residual norm. This leads to inexact Krylov methods, for which there is a rich body of literature~\cite{BourasFrysse2005,ChenEtAl2016,DuSzyld2008,GrattonEtAl2020,KahlRittich2017,MansourGotze2014,SidjeWinkles2011,SimonciniSzyld2005b,SimonciniSzyld2005,VanDenEshofEtAl2004,WangEtAl2022}.

The choice of the tolerance with which to solve the systems affects accuracy and computational cost. It was shown in \cite{SimonciniSzyld2003} that one can loosen the tolerances of inexact matrix-vector products in GMRES, CG and related methods, throughout the iterations. More specifically, the required tolerances can be chosen inversely proportional to the computed residual norm of the previous iteration. This observation also applies to shift-and-invert Arnoldi for shifted linear systems~\cite{SimonciniSzyld2003}.

We adopt the strategy for RK-GMRES, similar to~\cite{SimonciniSzyld2003,GiraudEtAl2007}, and we also apply it to CORK-GMRES. We derive conditions on the residual norms in both settings.

\subsection{Inexact RK-GMRES}\label{ssec:inexact-rk}
Inexact solves in LRK-GMRES transform the exact recurrence relations \cref{eq:rat-krylov-reccurence} into a form that includes the inner residuals,
\begin{equation}\label{eq:inexact-lrk-gmres-relation}
        \bA \bmV_{m+1} \Hu_m = \bB \bmV_{m+1} \Ku_m - \bmat{P}_m.
\end{equation}
Using
$\bmat{W}_m=\bmV_{m+1}\Hu_m$, with columns $\bvec{w}_k$, $\bmat{P}_m=[\bvec{p}_1 \, \cdots \, \bvec{p}_m]$ contains the residuals
\[
\bvec{p}_k := \bB\bvv_{\vec{t},k} - (\bA-\sigma_k \bB)\bvec{w}_k, \quad \bvv_{\vec{t},k}:=\bmV_k\vec{t}_k, \quad k=1,\dots,m .
\]
The final true residual of LRK-GMRES is composed of a contribution from $\bmat{P}_m$ and the residual $\tilde{\bvec{r}}^\sigma_m$ of the least-squares problem, which can be cheaply computed,
\[
\tilde{\bvec{r}}^\sigma_m:=\bmV_{m+1}\left(\hbeta\vec{e}_1-(\Ku_m-\mu\Hu_m)\vy^\red_m\right).
\]
An appropriate choice for the inner tolerances can be established.
\begin{proposition}\label{prop:inexact-lrk-gmres}
Let $\epsilon>0$. Let $\bvec{r}_m = \bvd - (\bA-\mu \bB)\bvy_m$ be the true residual and $\tilde{\bvec{r}}^\sigma_m$ the computed residual after $m$ inexact iterations of LRK-GMRES. If the inner residual $\bvec{p}_k$ of iteration $k$ satisfies
    \begin{equation*}
        \|\bvec{p}_k\| \le \dfrac{1}{|\mu-\sigma_0|}\dfrac{\mathrm{sval}_\mathrm{min}(\Ku_m-\mu\Hu_m)}{m}\dfrac{\epsilon}{\|\tilde{\bvec{r}}^{\sigma}_{k-1}\|}, \qquad k=1,\ldots,m,
    \end{equation*}
    then
    \[
    \|\bvec{r}_m\|\le\|(\bA-\sigma_0\bB)\tilde{\bvec{r}}^\sigma_m\|+\epsilon.
    \]
\end{proposition}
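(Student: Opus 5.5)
The plan is to write the true residual as the preconditioned computed residual $\tilde{\bvec{r}}^\sigma_m$ plus an error term built from the inner residuals $\bmat{P}_m$. That error term is then bounded by the Simoncini--Szyld argument, which shows that the $k$th coefficient of the reduced solution is small whenever the reduced residual at step $k-1$ is small. As in \cref{prop:lrk-gmres}, I assume the initial solve is exact, so that $\bvd=(\bA-\sigma_0\bB)\hbeta\bvv_1$.

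\textbf{Step 1: inexact version of \cref{eq:rat-krylov-reccurence-2}.} Subtracting $\sigma_0\bB\bmV_{m+1}\Hu_m$ from both sides of \cref{eq:inexact-lrk-gmres-relation} gives
\[
\bB\bmV_{m+1}(\Ku_m-\sigma_0\Hu_m)=(\bA-\sigma_0\bB)\bmV_{m+1}\Hu_m+\bmat{P}_m .
\]
Next write $(\bA-\mu\bB)=(\bA-\sigma_0\bB)+(\sigma_0-\mu)\bB$ and insert the identity above. This yields
\[
(\bA-\mu\bB)\bmV_{m+1}(\Ku_m-\sigma_0\Hu_m)=(\bA-\sigma_0\bB)\bmV_{m+1}(\Ku_m-\mu\Hu_m)+(\sigma_0-\mu)\bmat{P}_m .
\]
With $\bvy_m=\bmV_{m+1}(\Ku_m-\sigma_0\Hu_m)\vy^\red_m$, the true residual becomes
\[
\bvec{r}_m=(\bA-\sigma_0\bB)\tilde{\bvec{r}}^\sigma_m-(\sigma_0-\mu)\bmat{P}_m\vy^\red_m .
\]
By the triangle inequality,
\[
\|\bvec{r}_m\|\le\|(\bA-\sigma_0\bB)\tilde{\bvec{r}}^\sigma_m\|+|\mu-\sigma_0|\sum_{k=1}^m\|\bvec{p}_k\|\,|\vec{e}_k^\tr\vy^\red_m| .
\]

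\textbf{Step 2: bound the coefficients.} This is the key step. Let $\mat{G}:=\Ku_m-\mu\Hu_m$. Since $\Hu_m$ is unreduced, $\mat{G}$ generically has full column rank, so $\vy^\red_m=\mat{G}^+\hbeta\vec{e}_1$ and $\mat{G}^+\mat{G}=\Id_m$. Because both matrices are upper Hessenberg, the reduced problem at step $k-1$ uses the leading $k\times(k-1)$ block of $\mat{G}$. Padding its solution $\vy^\red_{k-1}$ with zeros to length $m$, we get
\[
\mat{G}\,[\vy^\red_{k-1};0]=[(\Ku_{k-1}-\mu\Hu_{k-1})\vy^\red_{k-1};0].
\]
Then
\[
\mat{G}^+\bigl(\hbeta\vec{e}_1-\mat{G}[\vy^\red_{k-1};0]\bigr)=\vy^\red_m-[\vy^\red_{k-1};0].
\]
The $k$th entry of the right-hand side is exactly $\vec{e}_k^\tr\vy^\red_m$. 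The vector inside the parentheses is the reduced residual at step $k-1$. Since $\bmV_{k}$ has orthonormal columns, its norm equals $\|\tilde{\bvec{r}}^\sigma_{k-1}\|$. Hence
\[
|\vec{e}_k^\tr\vy^\red_m|\le\|\mat{G}^+\|\,\|\tilde{\bvec{r}}^\sigma_{k-1}\|=\frac{\|\tilde{\bvec{r}}^\sigma_{k-1}\|}{\mathrm{sval}_\mathrm{min}(\Ku_m-\mu\Hu_m)} .
\]
For $k=1$ the padded vector is zero and $\|\tilde{\bvec{r}}^\sigma_0\|=\hbeta$.

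\textbf{Step 3: conclude.} Insert the assumed bound on $\|\bvec{p}_k\|$ into Step 1. Each of the $m$ terms is then at most $\epsilon/m$, so the sum is at most $\epsilon$, which gives the claim.

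The main care point is Step 2. One has to check that the Hessenberg nesting really makes the step-$(k-1)$ least-squares problem a leading subproblem of the step-$m$ problem, with the same right-hand side $\hbeta\vec{e}_1$. This holds because the rational Krylov recurrence only appends columns and rows to $\Ku$ and $\Hu$. One must also handle the possibility that $\mat{G}$ is rank deficient; if $\mathrm{sval}_\mathrm{min}=0$ the hypothesis forces $\bmat{P}_m=0$, and the bound then holds trivially.
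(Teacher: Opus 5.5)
Your proof is correct and follows the paper's argument. You derive the same inexact identity $(\bA-\mu\bB)\bmV_{m+1}(\Ku_m-\sigma_0\Hu_m)=(\bA-\sigma_0\bB)\bmV_{m+1}(\Ku_m-\mu\Hu_m)+(\sigma_0-\mu)\bmat{P}_m$, split $\bvec{r}_m$ into $(\bA-\sigma_0\bB)\tilde{\bvec{r}}^\sigma_m$ plus $(\mu-\sigma_0)\bmat{P}_m\vy^\red_m$, and bound $|y^\red_{m,k}|$ by $\|(\Ku_m-\mu\Hu_m)^\dagger\|\,\|\tilde{\bvec{r}}^\sigma_{k-1}\|$ via the nested-Hessenberg argument of Giraud et al., which you write out in full where the paper only cites it; your explicit handling of the exact initial solve and the rank-deficient case are reasonable additions that the paper leaves implicit.
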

\begin{proof}
We transform \cref{eq:inexact-lrk-gmres-relation} as follows
\begin{align*}
    &\bA \bmV_{m+1} \Hu_m - \sigma_0\bB \bmV_{m+1} \Hu_m = \bB \bmV_{m+1} \Ku_m - \sigma_0\bB \bmV_{m+1} \Hu_m - \bmat{P}_m \\
    &\Leftrightarrow (\bA- \sigma_0\bB) \bmV_{m+1} \Hu_m = \bB \bmV_{m+1} (\Ku_m - \sigma_0\Hu_m) - \bmat{P}_m \\
    &\Leftrightarrow (\sigma_0-\mu)(\bA- \sigma_0\bB) \bmV_{m+1} \Hu_m +(\bA-\sigma_0 \bB)\bmV_{m+1}(\Ku_m-\sigma_0\Hu_m) \\
    &\quad = (\sigma_0-\mu)\bB \bmV_{m+1} (\Ku_m - \sigma_0\Hu_m) + (\bA-\sigma_0 \bB)\bmV_{m+1}(\Ku_m-\sigma_0\Hu_m) - (\sigma_0-\mu)\bmat{P}_m \\
    &\Leftrightarrow (\bA-\sigma_0 \bB)\bmV_{m+1}(\Ku_m-\mu\Hu_m) = (\bA-\mu \bB)\bmV_{m+1}(\Ku_m-\sigma_0 \Hu_m) + (\mu-\sigma_0) \bmat{P}_m.
\end{align*}
Using $\bvy_m=\bmV_{m+1}(\Ku_m-\sigma_0\Hu_m)\vy^\red_m$, the final residual vector $\bvec{r}_m$ becomes
\begin{align*}
    \bvec{r}_m &= \bvd - (\bA-\mu \bB)\bvy_m \\
        &= \bvd -(\bA-\mu \bB)\bmV_{m+1}(\Ku_m-\sigma_0\Hu_m)\vy^\red_m \\
        &= \bvd -(\bA-\sigma_0 \bB)\bmV_{m+1}(\Ku_m-\mu\Hu_m)\vy^\red_m + (\mu-\sigma_0)\bmat{P}_m\vy^\red_m \\
        &= (\bA-\sigma_0 \bB)\tilde{\bvec{r}}^\sigma_m + (\mu-\sigma_0)\bmat{P}_m\vy^\red_m \\
        &= (\bA-\sigma_0 \bB)\tilde{\bvec{r}}^\sigma_m + (\mu-\sigma_0) \sum_{k=1}^m  y^\red_{m,k} \bvec{p}_k.
\end{align*}
Similarly to \cite[\S2.1]{GiraudEtAl2007}, the magnitudes of the elements of $\vy_m^\red$ can be bounded as $|y_{m,k}^\red|\le\|(\Ku_m-\mu\Hu_m)^\dagger\|\|\tilde{\bvec{r}}_{k-1}\|$, $k=1,\dots,m$. The result follows.
\end{proof}
As was to be expected, there is a gap between the computed residual $\tilde{\bvec{r}}^\sigma_m$ and the true final residual $\bvec{r}_m$. 
\Cref{prop:inexact-lrk-gmres} does not take into account solving the first system with $\sigma_0$ inexactly. It is clear that an inner residual $\bvec{p}_0$ introduced when solving the system with $(\bA-\sigma_0\bB)$, propagates into a contribution to $\|\bvec{r}_m\|$ of at most $\|\bvec{p}_0\|$.

The case of RRK-GMRES is similar, though somewhat simpler as the residual is not preconditioned. With inexact solves the recurrence relation \cref{eq:rrkgmres-shifted-recurrence} becomes
\begin{equation*}
    (\bA-\mu \bB)\bmZ_m = \bmV_{m+1}(\Ku_m-\mu \Hu_m) - \bmat{P}_m, \qquad \bvec{p}_k:=\bvv_{\vec{t},k}-(\bA-\sigma_k \bB)\bvz_k.
\end{equation*}
For a solution $\bvy_m(\mu)=\bmZ_m\vy^\red_m(\mu)$, the final true residual $\bvec{r}_m:=\bvec{r}_m(\mu)$ becomes
\begin{equation*}
    \bvec{r}_m = \bvd - (\bA-\mu \bB)\bmZ_m\vy^\red_m = \bmV_{m+1}(\beta \vec{e}_1 - (\Ku_m-\mu\Hu_m)\vec{y}^\red_m) + \bmat{P}_m\vy^\red_m = \tilde{\bvec{r}}_m + \bmat{P}_m\vy^\red_m
\end{equation*}
where $\tilde{\bvec{r}}_m:=\tilde{\bvec{r}}_m(\mu)$ denotes the computed residual of the small least-squares problem.

The following is similar to the result derived in \cite{Correnty2024}.
\begin{proposition}
    Let $\epsilon>0$. Let $\bvec{r}_m$ be the true residual and $\tilde{\bvec{r}}_m$ the computed residual after $m$ inexact iterations of RRK-GMRES. If the inner residual $\bvec{p}_k$ satisfies
    \begin{equation*}
        \|\bvec{p}_k\| \le \dfrac{\mathrm{sval}_\mathrm{min}(\Ku_m-\mu\Hu_m)}{m}\dfrac{ \epsilon}{\|\tilde{\bvec{r}}_{k-1}\|}
    \end{equation*}
    for $k=1,\dots,m$, then $\|\bvec{r}_m\|\le\|\tilde{\bvec{r}}_m\|+\epsilon$.
\end{proposition}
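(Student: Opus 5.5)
The argument starts from the identity derived just above the statement, namely
\[
\bvec{r}_m = \tilde{\bvec{r}}_m + \bmat{P}_m\vy^\red_m = \tilde{\bvec{r}}_m + \sum_{k=1}^m y^\red_{m,k}\,\bvec{p}_k .
\]
The triangle inequality then gives $\|\bvec{r}_m\|\le\|\tilde{\bvec{r}}_m\|+\sum_{k=1}^m |y^\red_{m,k}|\,\|\bvec{p}_k\|$. It therefore suffices to show that each term of the sum is at most $\epsilon/m$. This is the same skeleton as the proof of \cref{prop:inexact-lrk-gmres}, but simpler: no preconditioner $(\bA-\sigma_0\bB)$ needs to be pulled through the residual, so no factor $|\mu-\sigma_0|$ appears.

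The key step is a componentwise bound on $\vy^\red_m$, namely
\[
|y^\red_{m,k}|\le \|(\Ku_m-\mu\Hu_m)^\dagger\|\,\|\tilde{\bvec{r}}_{k-1}\| = \frac{\|\tilde{\bvec{r}}_{k-1}\|}{\mathrm{sval}_\mathrm{min}(\Ku_m-\mu\Hu_m)},
\]
following \cite[\S2.1]{GiraudEtAl2007} (see also \cite{SimonciniSzyld2003}). I would argue it as follows. Write $\mat{G}_m:=\Ku_m-\mu\Hu_m$. This matrix is upper Hessenberg of size $(m+1)\times m$, and I assume it has full column rank, as is implicit in the statement. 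Its leading $(k\!+\!1)\times k$ block is $\mat{G}_k$, and the least-squares problems are nested: the $k$th iterate minimizes $\|\beta\vec{e}_1-\mat{G}_k\vy\|$ with residual norm $\|\tilde{\bvec{r}}_k\|$.

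The cleanest route is to compare consecutive least-squares residual vectors. Embed $\vy^\red_{k-1}$ into $\C^m$ by zero padding, and note that $\vy^\red_m$ minimizes $\|\beta\vec{e}_1-\mat{G}_m\vy\|$ over $\C^m$. Let $\vec{s}_{k-1}:=\beta\vec{e}_1-\mat{G}_m[\vy^\red_{k-1};0]$. This vector has the same norm as the $(k-1)$th small residual, $\|\tilde{\bvec{r}}_{k-1}\|$, because its trailing entries vanish by the Hessenberg structure. Then $\vy^\red_m-[\vy^\red_{k-1};0]=\mat{G}_m^\dagger \vec{s}_{k-1}$, and the $k$th entry of the left side is exactly $y^\red_{m,k}$, since the padded vector has zero $k$th entry. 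Hence
\[
|y^\red_{m,k}|\le\|\mat{G}_m^\dagger\|\,\|\vec{s}_{k-1}\| = \frac{\|\tilde{\bvec{r}}_{k-1}\|}{\mathrm{sval}_\mathrm{min}(\mat{G}_m)}.
\]
The main point to check carefully is that $\mat{G}_m^\dagger$ applied to $\vec{s}_{k-1}$ indeed gives that difference. This holds because $\mat{G}_m^\dagger\mat{G}_m=\Id_m$ for full column rank.

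Combining the pieces, the tolerance assumption on $\|\bvec{p}_k\|$ gives
\[
|y^\red_{m,k}|\,\|\bvec{p}_k\|\le \frac{\|\tilde{\bvec{r}}_{k-1}\|}{\mathrm{sval}_\mathrm{min}}\cdot\frac{\mathrm{sval}_\mathrm{min}}{m}\frac{\epsilon}{\|\tilde{\bvec{r}}_{k-1}\|}=\frac{\epsilon}{m}.
\]
Summing over $k=1,\dots,m$ yields $\|\bvec{r}_m\|\le\|\tilde{\bvec{r}}_m\|+\epsilon$. I would add the remark, as in the literature, that the condition depends on $\mathrm{sval}_\mathrm{min}$ at the final step $m$, so in practice it must be estimated a priori.
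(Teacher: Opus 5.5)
Your proof is correct and follows the paper's argument: the decomposition $\bvec{r}_m=\tilde{\bvec{r}}_m+\bmat{P}_m\vy^\red_m$, the triangle inequality, and the componentwise bound $|y^\red_{m,k}|\le\|\tilde{\bvec{r}}_{k-1}\|/\mathrm{sval}_\mathrm{min}(\Ku_m-\mu\Hu_m)$, which makes each term at most $\epsilon/m$. The only difference is that you derive this componentwise bound explicitly, via the zero-padded previous iterate, the Hessenberg structure, and $\mat{G}_m^\dagger\mat{G}_m=\Id_m$. The paper does not derive it and simply cites \cite[\S2.1]{GiraudEtAl2007}.
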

\begin{proof}
    A bound on the true final residual norm is given by
\begin{equation*}
    \|\bvec{r}_m\| \le \|\tilde{\bvec{r}}_m\| + \|\bmat{P}_m\vy^\red_m\| \le \|\tilde{\bvec{r}}_m\| + \sum_{k=1}^m  |y_{m,k}^\red| \|\bvec{p}_k\|.
\end{equation*}
The result again follows by bounding $|y^\mathrm{red}_{m,k}|$, $k=1,\dots,m$, as in \cite[\S2.1]{GiraudEtAl2007}.
\end{proof}
\begin{remark}
    Implicitly, the residuals $\tilde{\bvec{r}}_{k-1}$, $\tilde{\bvec{r}}^\sigma_{k-1}$ and $\bvec{r}_m$ are all $\mu$-dependent. To guarantee that $\|\bvec{r}_m(\mu)\|\le\|\tilde{\bvec{r}}_m(\mu)\|+\epsilon$ holds for all $\mu\in S$, the bound on $\|\bvec{p}_k\|$, $k=1,\dots,m$, must be minimized over $S$. The factor $\mathrm{sval}_\mathrm{min}(\Ku_m-\mu\Hu_m)$ is not known a priori. Following \cite{SimonciniSzyld2003} and references therein, it is replaced by a constant in practice.
\end{remark}
\begin{remark}
    One may require a relative tolerance on the final true residual norm by scaling the bound on $\|\bvec{p}_k\|$ by $\|\bvd\|$. The bound can then be considered a relative one, since $\|\bvv_{\vec{t},k}\|=1$. Both observations do not hold for LRK-GMRES. Here, the true residual is not a simple sum of two contributions and $\|\bB\bvv_{\vec{t},k}\|\ne1$.
\end{remark}

\subsection{Inexact CORK-GMRES}\label{ssec:inexact-cork}
When $(\bA,\bB)$ is the companion linearization of a matrix-valued function $\mP$, the residual $\bvec{p}=\bvy-(\bA-\mu \bB)\bvz$, $\|\bvec{p}\|\le\tau$ originates from an inexact solve with $\mP(\mu)$. Under the assumption that all other operations in efficiently applying $(\bA-\mu \bB)^{\minone}$ are performed exactly, the following holds.
\begin{proposition}\label{prop:inexact-cork}
    Let $\tau>0$. Let $(\bA,\bB)$ be the companion linearization defined by \cref{eq:linearization} for matrix-valued function $\mP$ \cref{eq:matrix-function-add}. Assume we solve a system with matrix $\bA-\sigma\bB$ and right-hand side $\bvy$, using the block ULP decomposition (\cref{thm:ULP}) where all but the system solution with $\mP(\sigma)$ is performed exactly. Denote the approximate solution as $\bvz$ with residual $\bvec{p}=\bvy-(\bA-\mu \bB)\bvz$. Then,
    \begin{equation*}
        \|\vec{q}\| \le |\alpha|\tau, \quad \vec{q}=\vec{w}^{[1]}-\mP(\mu)\vz^{[p]}, \quad \vec{w}^{[1]} = \alpha (\vy^{[1]}-\cU_{12}\vy^{[2:d]})
    \end{equation*}
    guarantees $\|\bvec{p}\|\le\tau$, where $p$ is s.t.\ $\cP\vec{e}_p=\vec{e}_1$, $\alpha=\vec{e}_1^\tr\cP\vf(\sigma)$ and $\cU_{12}$ is the upper-right block of $\cU$.
\end{proposition}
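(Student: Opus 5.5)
The plan is to track the inexactness through the three stages of the block ULP solve from \cref{thm:ULP} and show that the outer residual $\bvec{p}$ is just the inner residual $\vec{q}$ of the $\mP(\sigma)$-solve, rescaled by $\alpha^{\minone}$ and placed in the first block. Throughout I read the $\mu$ in the residual as the shift $\sigma$ at which the system is solved; that is the only consistent reading, since the ULP factorization is taken at $\sigma$.

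\textbf{Step 1: write out the exact solve.} Set $\bmat{L}(\sigma)=\cU\cL(\cP\kron\Id_n)$, with $\bvec{u}:=(\cP\kron\Id_n)\bvz$ and $\bvec{w}:=\cU^{\minone}\bvy$. Because $\cU$ is block upper triangular with diagonal blocks $\alpha^{\minone}\Id_n$ and $\Id_{(d-1)n}$, its inverse is explicit:
\[
\vec{w}^{[1]}=\alpha(\vy^{[1]}-\cU_{12}\vy^{[2:d]}),\qquad \vec{w}^{[2:d]}=\vy^{[2:d]}.
\]
This is exactly the $\vec{w}^{[1]}$ in the statement. Next, the forward solve with the block lower triangular $\cL$ gives $\vec{u}^{[1]}$ from $\mP(\sigma)\vec{u}^{[1]}=\vec{w}^{[1]}$. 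The remaining blocks come from
\[
\bigl((\mM_2-\sigma\mN_2)\kron\Id_n\bigr)\vec{u}^{[2:d]}=\vec{w}^{[2:d]}-\bigl((\vec{m}_1-\sigma\vec{n}_1)\kron\Id_n\bigr)\vec{u}^{[1]}.
\]
Since $\cP\vec{e}_p=\vec{e}_1$, the first block of $\bvec{u}$ is the $p$th block of $\bvz$, so $\vec{u}^{[1]}=\vz^{[p]}$.

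\textbf{Step 2: compute the residual.} By assumption only the $\mP(\sigma)$-solve is inexact, with $\vec{q}=\vec{w}^{[1]}-\mP(\sigma)\vz^{[p]}$. The second block row is solved exactly using this computed $\vz^{[p]}$, so its residual vanishes. Hence
\[
\bvec{w}-\cL\bvec{u}=\left[\begin{array}{c}\vec{q}\\ \vec{0}\end{array}\right],
\]
and therefore
\[
\bvec{p}=\bvy-\bmat{L}(\sigma)\bvz=\cU(\bvec{w}-\cL\bvec{u})=\cU\left[\begin{array}{c}\vec{q}\\ \vec{0}\end{array}\right]=\left[\begin{array}{c}\alpha^{\minone}\vec{q}\\ \vec{0}\end{array}\right].
\]
This uses that the first block column of $\cU$ is $[\alpha^{\minone}\Id_n;\,\mat{0}]$.

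\textbf{Step 3: conclude.} From Step 2, $\|\bvec{p}\|=|\alpha|^{\minone}\|\vec{q}\|$, which is at most $\tau$ whenever $\|\vec{q}\|\le|\alpha|\tau$. Here $\alpha\ne0$ by \cref{thm:ULP}.

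The main obstacle is bookkeeping rather than analysis. One must handle the permutation $\cP\kron\Id_n$ correctly to identify $\vec{u}^{[1]}$ with $\vz^{[p]}$. One must also make sure the exact lower-block solve really uses the computed, inexact $\vz^{[p]}$, so that the error does not propagate into the other block rows.
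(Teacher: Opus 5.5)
Your proposal is correct and follows essentially the same route as the paper's proof. Both compute $\bvec{w}=\cU^{\minone}\bvy$ explicitly, do the block forward solve with $\cL$ in which only the $\mP(\sigma)$-block is inexact, and obtain $\bvec{p}=\cU(\bvec{w}-\cL\bvec{u})=[\alpha^{\minone}\vec{q}^\tr\;\vec{0}]^\tr$; yours is slightly cleaner, since you handle the permutation explicitly where the paper simply sets $p=1$, and your signs stay consistent where the paper's intermediate expressions for $\vz^{[1]}$ and $\cL\bvz$ contain sign slips.
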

\begin{proof}
    For ease of exposition we disregard the permutation $(\cP\kron I_n)$ s.t.\ $p=1$ and  write
    \begin{equation*}
        \cL =: \left[\begin{array}{cc}
            \mP(\mu) & \mat{0} \\
            \cL_{21} & \cL_{22}
        \end{array}\right], \quad\quad\quad 
        \cU =: \left[\begin{array}{cc}
            \alpha^{\minone}\Id_n & \cU_{12} \\
            \mat{0} & \Id_{(d-1)n}
        \end{array}\right].
    \end{equation*}
$\bvz\approx(\cU\cL)^{\minone}\bvy=\cL^{\minone}\cU^{\minone}\bvy$ is computed as $\bvec{w}=\cU^{\minone}\bvy$ and $\bvz\approx\cL^{\minone}\bvec{w}$. We get that 
$$\vec{w}^{[2:d]}=\vy^{[2:d]} \quad\text{and}\quad \vec{w}^{[1]} = \alpha (\vy^{[1]}-\cU_{12}\vy^{[2:d]}).$$ 
Next, $\vz^{[1]}\approx \mP(\mu)^{\minone}\vec{w}^{[1]}$ such that $\vec{q}=\vec{w}^{[1]}-\mP(\mu)\vz^{[1]}$, i.e.\ 
$$\vz^{[1]}=\mP(\mu)^{\minone}(\vec{q}-\vec{w}^{[1]}). \quad\text{Also,}\quad \vz^{[2:d]} = \cL_{22}^{\minone}(\vec{w}^{[2:d]}-\cL_{21}\vz^{[1]}).$$
Now computing the residual $\bvec{p}=\bvy-(\cU\cL)\bvz$, 
$$\cL\bvz = \left[\begin{array}{c}
    \vec{q}-\vec{w}^{[1]} \\
    -\vec{w}^{[2:d]}
\end{array}\right] = \left[\begin{array}{c}
    \vec{q}-\alpha (\vy^{[1]}-\cU_{12}\vy^{[2:d]}) \\
    -\vec{y}^{[2:d]}
\end{array}\right]
$$ 
and thus $\bvec{p} = [\alpha^{\minone}\vec{q}^\tr \; \vec{0}]^\tr$.
\end{proof}

In the CORK setting, emphasis must be placed on the fact that $\bvec{r}_m$ is not actually the residual of our original problem. Instead, it is residual $\vec{r}_m^\mP:=\vb-\mP(\mu)\vx_m$ that is of importance. Due to the expected form of $\bvy_m$, see \cref{thm:linearization}, one generally takes $\vx_m:=\vy_m^{[1]}/f_1(\mu)$. If the approximate solution is exactly representable as $\bvy_m = \vf(\mu)\otimes \vx_m$, then $\|\vec{r}^\mP_m\|=\|\bvec{r}_m\|$ ($\vec{r}_m^{[1]} = \vec{r}_m^\mP,\; \vec{r}_m^{[2:d]}=\vec{0}$), thus we expect $\|\vec{r}^\mP_m\|\approx\|\bvec{r}_m\|$.

When the right-hand side is parameter-dependent as discussed in \cref{ssec:cork-rhs}, solving the augmented system can clearly be achieved through solving a system with $\mP(\mu)$ with some additional cheap operations. However, when considering the inexact solves, the previous conditions relate to the augmented system, not the original.

The final true residual on the augmented system can be written as follows, assuming that CORK-GMRES is applied with starting vector $\vec{e}_{n+1}$,
\begin{align*}
    \vru_m^\mP(\mu) := \dfrac{\vbu(\mu)-\mPu(\mu)\vxu_m(\mu)}{\gamma(\mu)} &= \left[\begin{array}{c}
        \mat{0} \\
        1
    \end{array}\right] - 
    \left[\begin{array}{cc}
        \mP(\mu) & -\vb(\mu) \\
        \mat{0}_{1\times n} & \gamma(\mu)
    \end{array}\right]
    \left[\begin{array}{c}
       \vec{x}_m/\gamma(\mu) \\
        1/\gamma(\mu)
    \end{array}\right] \\
    &= \left[\begin{array}{c}
        (\vb(\mu) - \mP(\mu)\vx)/\gamma(\mu) \\
        0
    \end{array}\right] =
    \left[\begin{array}{c}
        \vr_m^\mP(\mu)/\gamma(\mu) \\
        0
    \end{array}\right].
\end{align*}
This shows that the true residual norm on the original system is scaled by $1/\gamma(\mu)$ and thus the bounds on the inner residual norms should scale by $1/|\gamma(\mu)|$. As the final element of the approximate solution is not exactly equal to $1/\gamma(\mu)$ in general, we only expect an approximate equality of $\|\vru_m^\mP(\mu)\|\approx\|\vr_m^\mP(\mu)\|/|\gamma(\mu)|$.

Besides relating the final true residual $\vru_m^\mP$ of the augmented system to that of the original system, we must do the same for the conditions on the inner residuals of the inexact solves. Consider the residual $\vqu = \underline{\vec{w}}^{[1]} - \mPu(\mu)\underline{\vz}^{[1]}$ on the augmented system\footnote{Note that we use similar notation as in \cref{prop:inexact-cork}.},
\begin{equation*}
    \vqu = \left[\begin{array}{c}
        \vec{w} \\
        w_{n+1}
    \end{array}\right] -
    \left[\begin{array}{cc}
        \mP(\mu) & -\vb(\mu) \\
        \mat{0}_{1\times n} & \gamma(\mu)
    \end{array}\right]
    \left[\begin{array}{c}
        \vz \\
        z_{n+1}
    \end{array}\right] = 
    \left[\begin{array}{c}
        \vec{w}+z_{n+1}\vb(\mu)-\mP(\mu)\vz \\
        w_{n+1}-\gamma(\mu) z_{n+1}
    \end{array}\right].
\end{equation*}
It is clear that we can choose $z_{n+1}=w_{n+1}/\gamma(\mu)$ such that $\|\vqu\|=\|\vq\|$ where $\vq$ is the residual on the inexact system solve of $\mP(\mu)\vz\approx \vec{w}+\frac{w_{n+1}}{\gamma(\mu)}\vb(\mu)$. Note that the residual norms are equivalent but the right-hand sides are not, such that the required relative tolerance is different from the non-augmented case.

We conclude by summarizing the practical bounds. In LCORK-GMRES applied to $\mP$ with right-hand side $\vb$, the condition
\begin{equation*}
    \|\vp_k\| \le \min_{\mu\in S}\dfrac{|\trns{\vec{e}}_1\cP\vf(\sigma_k)|}{|\mu-\sigma_0|} \dfrac{\text{sval}_\text{min}(\Ku_m-\mu\Hu_m)}{m} \dfrac{\|\vb\|}{\|\tilde{\bvec{r}}^{\sigma}_{k-1}(\mu)\|} \epsilon,\quad k=1,\dots,m,
\end{equation*}
where $\vp_k$ is the inner residual of the inexact solve with $\mP(\sigma_k)$ at iteration $k$, guarantees\footnote{Remember that these ``guarantees" hinge on the assumption that the full approximate solution $\bvy_m$ is representable as $\vf(\mu)\otimes\vx_m$ for some $\vx_m$. These assumptions are practically never satisfied exactly, only approximately.} $\|\vr_m^\mP(\mu)\|\le \|(\bA-\sigma_0 \bB)\tilde{\bvec{r}}^{\sigma}_m(\mu)\| + \epsilon \|\vb\|$ for all $\mu\in S$. Similarly for RCORK-GMRES,
\begin{equation*}
    \|\vp_k\| \le \min_{\mu\in S}|\trns{\vec{e}}_1\cP\vf(\sigma_k)| \dfrac{\text{sval}_\text{min}(\Ku_m-\mu\Hu_m)}{m} \dfrac{\|\vb\|}{\|\tilde{\bvec{r}}_{k-1}(\mu)\|} \epsilon,\quad k=1,\dots,m,
\end{equation*}
guarantees $\|\vr_m^\mP(\mu)\|\le\|\tilde{\bvr}_m(\mu)\|+\epsilon \|\vb\|$ for all $\mu\in S$. For parameter-dependent right-hand sides, when the methods are applied to $\mPu$ with right-hand side $\vec{e}_{n+1}$, the bounds should be divided by $|\gamma(\mu)|$ and $\|\vb\|=\|\vb(\mu)\|$ is also $\mu$-dependent.

\begin{remark}\label{rmk:inner-rel-tol}
    In the numerical experiments, we always report the inner \emph{relative} tolerance on $\|\vp_k\|$ because this better indicates the cost of the system solution. This corresponds to the above bounds, divided by the norm of the system's right-hand side. These relative tolerances do not necessarily increase monotonically.
\end{remark}

\subsection{Numerical experiment: parametrized Helmholtz}\label{ssec:cork-numexp}

We illustrate the effectiveness of LCORK- and RCORK-GMRES, and the incorporation of inexact Krylov theory, by comparing to the inexact infinite GMRES method from \cite{Correnty2024}. We repeat the experiment from Figure 6.7(b) of \cite{Correnty2024}, which originates from a finite element discretization of a 2D parametrized Helmholtz equation (see \S6.2 for details\footnote{Implementation note: to obtain the large system, we have set `200' to `400' on line 103 of \texttt{fem\_matrices.py} at \href{https://github.com/siobhanie/FlexInfGMRES}{github.com/siobhanie/FlexInfGMRES}.}).

We perform CORK-GMRES for $\mu\in[0,2.5]$ with shifts $\bm{\sigma}=(0.25,1.25,2.25)$. We use both the regular versions with LU factorizations and the inexact versions with BiCGStab. In the latter, bounds in \S\ref{ssec:inexact-cork} are used for the inner tolerances, with $\epsilon=10^{-9}$ and the unknown smallest singular values replaced by one. 
In LCORK-GMRES, $\sigma_0$ is set to $1.25$ and this initial inexact solve is performed using a relative tolerance of $10^{-10}$. The sine function in the formulation is approximated with AAA~\cite{NakatsukasaEtAl2018}, using a tolerance of $10^{-13}$, which yields a rational approximant with $8$ support points. The resulting linearization differs significantly from the Taylor series linearization in \cite{Correnty2024}.

The results are shown in \cref{fig:numexp-phfem}. We consider the maximum relative residual norm $\|\vec{r}_k^\mP(\mu)\|/\|\vb\|=\|\vb-\mP(\mu)\vx_k(\mu)\|/\|\vb\|$ over $\mu\in[0,2.5]$. Comparing to (inexact) infinite GMRES, the number of required iterations to achieve similar residual norms is significantly decreased, owing to the absence of initial stagnation. In the exact case, a relative residual norm of $10^{-10}$ is achieved in $20$ iterations rather than $40$. The inner relative tolerances are shown to increase throughout the iterations. The iteration at which the inexact version stagnates  between $10^{-9}$ and $10^{-10}$  aligns well with the inner tolerance reaching a value around 1.

\begin{figure}[tbp]
\begin{tikzpicture}
\begin{groupplot}[group style={group size=2 by 1,
                    ylabels at=edge left,
                    yticklabels at=edge left,
                    horizontal sep=0.22\linewidth},
                  xlabel = {Iteration $k$},
                  ylabel = {\small$\max_\mu{\|\vec{r}^\mP_k(\mu)\| \, /\, \|\vb\|}$},
                  legend style={legend columns=1}]

\nextgroupplot[width=0.32\linewidth,
              ymin=1e-13, ymax=1e1,
              ytick={1e-1,1e-4,1e-7,1e-10,1e-13},
              scale only axis,
              ymode=log,
              grid=both,
              grid style = {dotted,gray},
              legend to name=grouplegend,
              legend style = {font=\small}]

    \pgfplotstableread[col sep=comma]{data/fig1-phfem/phfem-icork_left-1.0e-9_0.001.csv}{\Ldata} 

        \addplot [color=T-Q-HC3,mark=o,mark options=solid,thick] table 
            [skip first n=1, x index=0, y expr=\thisrowno{2}] 
            {\Ldata};\addlegendentry{BiCGStab};
        \addplot [color=T-Q-HC4,dashed,mark=x,mark options=solid,thick] table 
            [skip first n=1, x index=0, y expr=\thisrowno{1}] 
            {\Ldata};\addlegendentry{LU};
        \addplot [color=T-Q-HC0,dotted,thick] table 
            [skip first n=1, x index=0, y expr=\thisrowno{3}] 
            {\Ldata};\addlegendentry{inner tol};

\nextgroupplot[width=0.32\linewidth,
               ymin=1e-13, ymax=1e1,
               ytick={1e-1,1e-4,1e-7,1e-10,1e-13},
               scale only axis,
               ymode=log,
               grid=both,
               grid style = {dotted,gray}]
                    
    \pgfplotstableread[col sep=comma]{data/fig1-phfem/phfem-icork_right-1.0e-9_0.001.csv}{\Rdata}

        \addplot [color=T-Q-HC3,mark=o,mark options=solid,thick] table 
            [skip first n=1, x index=0, y expr=\thisrowno{2}] 
            {\Rdata};
        \addplot [color=T-Q-HC4,dashed,mark=x,mark options=solid,thick] table 
            [skip first n=1, x index=0, y expr=\thisrowno{1}] 
            {\Rdata};
        \addplot [color=T-Q-HC0,dotted,thick] table 
            [skip first n=1, x index=0, y expr=\thisrowno{3}] 
            {\Rdata};
\end{groupplot}
\node [anchor=south] at ($(group c1r1.east)!0.5!(group c2r1.west)$) {\pgfplotslegendfromname{grouplegend}};
\end{tikzpicture}
\caption{Maximum relative residual norms of LCORK-GMRES (left) and RCORK-GMRES (right) applied to the parametrized Helmholtz problem, where system solves are either done exactly using an LU factorization or inexactly through BiCGStab. The inner relative tolerances (Remark \ref{rmk:inner-rel-tol}) for BiCGStab are also shown.}\label{fig:numexp-phfem}
\end{figure}

\section{Application to Helmholtz BEM}\label{sec:5_bem}

As main application of this manuscript, we consider the solution of scattering problems defined by the 3D Helmholtz equation, through the use of the Boundary Element Method (BEM).

\subsection{Problem definition}\label{ssec:bem-problem}

The application under consideration is that of acoustic (exterior) scattering problems in 3D, solved efficiently using their integral equation formulation. The Helmholtz equation $\Delta u + \kappa^2 u = 0$ is considered in an open domain $\Omega\subseteq\R^3$ where total field $u$ is composed of a given incident field $u^I$ and an unknown scattered field $u^S$, i.e.\ $u:=u^I+u^S$. Boundary conditions are imposed on $u$ and $u^S$ must satisfy the Sommerfeld radiation condition $\lim_{\mathbf{r}\to\infty}{\mathbf{r}\cdot\left(\partial u^S/\partial\mathbf{r} - i\kappa u^S\right)}=0$.
The elliptic boundary value problem can be reformulated as a boundary integral equation (BIE) \cite[\S7]{McLean2000} with an integral operator of the form
\begin{equation*}
    (\mathcal{A}_\kappa\phi)(\px) = \int_{\Gamma}g_\kappa(\px,\py)\phi(\py)\,\mathrm{d}s_\py, \;\; \forall \px\in\Gamma:=\partial\Omega
\end{equation*}
where the kernel function $g_\kappa$ involves the 3D Helmholtz Green's kernel
$\Gk(\px,\py) = {\mathrm{exp}({i\kappa\|\px-\py\|})}/{4\pi\|\px-\py\|}$
and/or its derivative(s).

To solve the BIE, it is discretized. Galerkin BEM employs two function bases $\{\psi_i\}_{i=1}^n$ and $\{\phi_j\}_{j=1}^n$ and results in a dense matrix $\mat{A}:=\mat{A}_\kappa$ with elements
\begin{equation*}
    a_{ij} = \left\langle \mathcal{A}_\kappa\phi_j,\psi_i \right\rangle = \int_\Gamma{\int_\Gamma{g_\kappa(\px,\py)\psi_i(\px)\phi_j(\py)\mathrm{d}s_\px\mathrm{d}s_\py}}, \quad i,j=1,\dots, n.
\end{equation*}

\subsection{System matrix representation}\label{ssec:bem-system-matrix}

While dense, the matrix $\mat{A}$ arising from Galerkin discretization exhibits data sparsity in the fact that it contains many large subblocks of approximate low rank. The $\mathcal{H}$-matrix format, introduced in \cite{Hackbusch1999}, exploits this by representing these blocks explicitly by an approximate low-rank factorization. The $\mathcal{H}$-matrix approximation $\mat{A}^{\mathcal{H}}$ of $\mat{A}$ is characterized as
\begin{equation*}
    \restr{\mat{A}^{\mathcal{H}}}{b} := \restr{\mat{A}}{b} \;\;\forall b\in\mathcal{L}^-, \quad\quad \restr{\mat{A}^{\mathcal{H}}}{b} := \mat{X}_b\mat{Y}_b^\cj \approx \restr{\mat{A}}{b} \;\;\forall b\in\mathcal{L}^+,
\end{equation*}
where $b:=\bt\times \bs,\; \bt,\bs\subset \{1,\dots,n\}$, $\mat{X}_b\in\C^{|\bt|\times k_b},\mat{Y}_b\in\C^{|\bs|\times k_b},\; k_b\le k=\mathcal{O}(1)$, and the set $\mathcal{L}:=\mathcal{L}^+\cup \mathcal{L}^-$ tiles the whole matrix. This representation allows an approximate representation of $\mat{A}$ in $\mathcal{O}(n\log n)$ complexity instead of $\mathcal{O}(n^2)$~\cite{BormGrasedyck2005}. The same complexity is achieved for the matrix-vector product.

In \cite{Dirckx2022}, a compact representation is presented for matrix-valued functions such as $\mat{A}(\kappa):=\mat{A}_\kappa$, based on a `$\mathcal{H}$-tensor' format and a `frequency extraction' technique to analytically remove most of the wavenumber dependence. This technique is not compatible with linearization, hence we employ the representation without. The $\mathcal{H}$-matrix format of the BEM matrix for a given frequency is extended into a $\mathcal{H}$-tensor format, in which the continuous wavenumber dependence is approximated by low-degree rational functions. We keep the discussion on $\mathcal{H}$-tensors here brief, full technical details are provided in~\cite{Bruyninckx2026}.

The $\mathcal{H}$-tensor approximation $\mat{A}^\mathcal{H}(\kappa)$ of a matrix-valued function $\mat{A}(\kappa)$ is defined using the same subdivision into subblocks as with $\mathcal{H}$-matrices, such that 
\begin{equation}\label{eq:htensor-original}
    \restr{\mat{A}^{\mathcal{H}}}{b}(\kappa) := \mat{U}_b\left(\sum_{\nu=1}^{R_b} \mat{G}_{b,\nu} \cdot \vec{h}_{b,\nu}^\tr \vec{g}(\kappa) \right)\mat{V}_b^\cj, \quad \forall b\in\mathcal{L}^+
\end{equation}
with $\mat{U}_b\in\C^{|\bt|\times \ell_b},\mat{V}_b\in\C^{|\bs|\times r_b}$ orthogonal, $\{\mat{G}_{b,\nu}
\}_{\nu=1}^{R_b}$ possibly non-square matrices, and $\vec{h}_{b,\nu}=[h_{b,\nu}(z_1) \,\cdots\, h_{b,\nu}(z_d)]^\tr$ and $\vec{g}(\mu)=\trns{[g_1(\mu) \,\cdots\, g_d(\mu)]}$, where
\begin{equation}\label{eq:bary-basis-funs}
g_j: \C \to \C : \mu \mapsto \left. \dfrac{w_j}{\mu-z_j} \middle/ \sum_{i=1}^d \dfrac{w_i}{\mu-z_i} \right. \quad \forall j\in\{1,\dots,d\}
\end{equation}
are the barycentric rational basis functions with support points $\{z_j\}_{j=1}^d$ and weights $\{w_j\}_{j=1}^d$. For $b\in\mathcal{L}^-$, we have $\restr{\mat{A}^{\mathcal{H}}}{b}(\kappa):=\sum_{\nu=1}^{R_b} \mat{A}_{b,\nu} \vec{h}_{b,\nu}^\tr \vec{g}(\kappa)$.

Similarly to how blocks of $\mathcal{H}$-matrices can be constructed algebraicly in $\mathcal{O}(n\log n)$ time through the use of Adaptive Cross Approximation (ACA)\cite{BebendorfRjasanow2003}, these $\mathcal{H}$-tensor blocks can be efficiently constructed using tensorial ACA variants. After performing QR-AAA~\cite{Dirckx2024b}, an accelerated version of set-valued AAA~\cite{LietaertEtAl2022} for simultaneous rational approximation of multiple scalar functions, we obtain the representation as given above, where all blocks share the $d$ basis functions $\vec{g}$.

\subsection{Incorporating the excitation}\label{ssec:bem-excitation} 

The right-hand side of a scattering BIE stems from the incident field $u^I$. However, generally, $u^I$ depends on the wavenumber $\kappa$, i.e.\ $u^I(\px):=u^I(\px;\kappa)$, just like the integral operator. When Galerkin BEM is employed, this results in a wavenumber-dependent right-hand side vector $\vb(\kappa)$.

To employ the augmented system technique (\cref{ssec:cork-rhs}), the right-hand side $\vb(\kappa)$ must satisfy condition \cref{eq:cond-param-dep-rhs}, or at least, an approximation to $\vb$ of this form must be constructed. This can be ensured by incorporating it into the input to QR-AAA. The end result is an approximation involving the same basis functions \cref{eq:bary-basis-funs}, such that
\begin{equation}\label{eq:rhs-affine}
    \vb(\kappa) \approx \tilde{\vb}(\kappa) := \mB^\tr \vec{g}(\kappa),\quad \mB \in \C^{d\times n}, \quad b_{ij}=b_j(z_i).
\end{equation}

The addition of the right-hand side into the QR-AAA approximation may increase the degree of the final rational approximant, which impacts the size of the linearization and thus the cost of CORK. We will not examine the degree increase in this manuscript as we are specifically interested in the CORK-based frequency sweep, which requires including the right-hand side anyway.

\subsection{Linearization and CORK application}\label{ssec:bem-cork}

With wavenumber-dependent representations of the system matrix \cref{eq:htensor-original} and right-hand side vector \cref{eq:rhs-affine}, a linearization can be constructed and CORK-GMRES performed. The parametrized system matrix of the augmented system is defined as
\begin{equation*}
    \mPu(\kappa) = \left[\begin{array}{cc}
        \mA^{\Hm}(\kappa) & -\tilde{\vb}(\kappa)  \\
         \vec{0}_{1\times n} & \gamma(\kappa) 
    \end{array}\right] = \sum_{i=1}^d \mCu_i g_i(\kappa), \quad \mCu_i = \left[\begin{array}{cc}
        \mC_i & -\vb_i  \\
         \vec{0}_{1\times n} & \gamma_i 
    \end{array}\right]
\end{equation*}
and for all $i=1,\dots,d$, $\vb_i = \vb(z_i)$, 
\begin{equation*}
    \restr{\mC_i}{b} = \mU_b \left(\sum_{\nu=1}^{R_b}h_{b,\nu}(z_i) \mat{G}_{b,\nu}\right)\conj{\mV}_b \;\; \forall b\in\mathcal{L}^+, \quad \restr{\mC_i}{b} = \sum_{\nu=1}^{R_b} h_{b,\nu}(z_i) \mat{A}_{b,\nu} \;\; \forall b\in\mathcal{L}^-.
\end{equation*}
To avoid scaling issues, the functions $\vf$ in $\cref{eq:matrix-function-add}$ are defined as $f_1 := 1$ and $f_i := g_{i-1}$, $i=2,\dots,d$. One can map between $\vec{g}$ and $\vf$ because $\sum_{i=1}^d g_i(\kappa) \equiv 1$. The coefficient matrices in $\cref{eq:matrix-function-add}$ are easily derived. We choose $\gamma(\kappa) \equiv 1$ by defining $\gamma_i:=1$ for all $i=1,\dots,d$. The particular matrix pencil $(\mM,\mN$) such that $(\mM-\kappa\mN)\vf(\kappa)$, is taken from \cite{LietaertEtAl2022}, although it is slightly modified: $\vec{g}$ is defined with weights in the numerator, resulting in a rescaling of rows and columns in $(\mM,\mN)$; $g_d$ is not present in $\vf$, removing a row and column of $(\mM,\mN)$. This is the final ingredient of the linearization $(\bA,\bB)$.

\begin{remark}\label{rmk:scaling}
    There are multiple ways of rescaling the linearization. Firstly $\gamma(\kappa)\equiv\bar{\gamma}$ could be chosen with $\bar{\gamma}\ne1$. Additionally, the pencil $(\mM,\mN)$ can have its rows scaled and combined arbitrarily. Finally, the right-hand side $\tilde{\vb}(\kappa)$ could also be rescaled by a constant. All these modifications may affect convergence behavior. We have not completely analyzed their effect and chose scaling factors that were observed to work well in the experiments of this paper. Convergence of LCORK-GMRES is less affected by scaling due to the preconditioning with $(\bA-\sigma_0\bB)^{\minone}$, but the inexact Krylov technique is affected in both versions. See \cite{GrammontEtAl2011} for some initial observations on scaling linearizations for parametrized systems.
\end{remark}

CORK relies on the capability to solve a system with $\mA^{\Hm}$ and perform a matrix-vector product with $\bbC$. The system solve is performed inexactly through an iterative solver. For each shift we obtain a cheap matrix-vector product by `slicing' the tensor: we construct a $\mathcal{H}$-matrix from the $\mathcal{H}$-tensor at the shift (see \cite{Dirckx2022,Dirckx2024,Bruyninckx2026}). This is inexpensive compared to constructing the $\mathcal{H}$-matrix from scratch. 

With the $\mathcal{H}$-matrix in hand, flexible GMRES (FGMRES) \cite{Saad1993} is employed on the $\mathcal{H}$-tensor, where the flexible preconditioner is BiCGStab(2) \cite{GutKnecht1993,Sleijpen1993} applied to the $\mathcal{H}$-matrix. This is necessary as the recompression performed to obtain the $\mathcal{H}$-matrix results in a numerical difference between it and the $\mathcal{H}$-tensor.\footnote{Solving a system with the $\mathcal{H}$-matrix directly is possible as well, as one can expand the inexact Krylov theory to also take into the account the error on the system matrix.} Using FGMRES allows us to use a fixed tolerance for both the $\mathcal{H}$-matrix compression as well as the inner BiCGStab(2) solution, throughout all CORK iterations. This lends itself nicely to the comparison with naively solving the systems at each wavenumber separately. In the numerical experiments, we show that FGMRES needs very few iterations to converge.

Performing the matrix-vector product $\vy=\bbC\bvx$ for $\vy\in\C^n$ and $\bvx\in\C^{dn}$ is the second ingredient. We refer to this as \emph{coefficient matrix application} (CMA). The operation can be defined in terms of matrix-vector products on the level of the blocks. The operation to perform for each block is
\begin{equation*}
    \restr{\vy}{\bt}^{(b)} = \sum_{i=1}^d \restr{\mC_i}{b} \restr{\vx}{\bs}^{[i]}, \quad \forall b = \bt\times\bs \in \mathcal{L}
\end{equation*}
after which all the computed subvectors are appropriately summed, similar to a normal $\Hm$-matrix-vector product. For admissible blocks, one can reorder the operation into \begin{equation*}
    \restr{\vy}{\bt}^{(b)} = \sum_{i=1}^d \mU_b \left(\sum_{\nu=1}^{R_b}h_{b,\nu}(z_i) \mat{G}_{b,\nu}\right)\conj{\mV}_b \restr{\vx}{\bs}^{[i]} = \mU_b \left[\begin{array}{ccc}\mat{G}_{b,1}&\cdots&\mat{G}_{b,R_b}\end{array}\right]\mathrm{vec}\left(\conj{\mV}_b\restr{\mat{X}}{\bs}\mat{H}_b\right),
\end{equation*}
\begin{equation*}
    \restr{\mat{X}}{\bs} = \left[\begin{array}{ccc} 
        \restr{\vx}{\bs}^{[1]} & \cdots & \restr{\vx}{\bs}^{[d]} 
    \end{array}\right], \quad 
    \mat{H}_b :=\left[\begin{array}{ccc}
        h_{b,1}(z_1) & \cdots & h_{b,R_b}(z_1)  \\
        \vdots & \ddots & \vdots \\
        h_{b,1}(z_d) & \cdots & h_{b,R_b}(z_d) 
    \end{array}\right] \quad\forall b\in\mathcal{L}^+.
\end{equation*}
The most expensive part of this operation is the matrix-matrix product of $\restr{\mat{X}}{\bs}\in\C^{|\bs|\times d}$ with $\conj{\mV}_b$. In a $\Hm$-matrix-vector product with $\mA^\Hm$, one would have to perform a matrix-vector product with $\conj{\mV}_b$ instead. The other expensive part is a matrix-vector product with $\mU_b$, also done in the regular $\Hm$-matrix-vector product. Thus, in terms of floating point operations, the total application of $\bbC$ has a cost of approximately $(d+1)/2$ matrix-vector products with $\mA^\Hm$. However, the matrix-matrix products can be much more efficient on hardware, reducing the actual relative cost in practice. 

\section{Numerical experiments}\label{sec:6_numexp}

We aim to evaluate the effectiveness of CORK-GMRES for frequency sweeping of exterior scattering problems via Helmholtz BEM. Using multiple examples, the convergence behavior over the frequency range is investigated for both the left- and right-preconditioned methods, and their cost is compared with naively solving systems at individual frequencies. Direct comparisons against other frequency sweeping approaches, as well as tackling high-frequency regimes, are out of the scope of this manuscript.

\subsection{Experimental setup}\label{ssec:numexp-setup}

In the experiments, we consider the exterior Dirichlet problem and use the \emph{indirect combined-field integral equation (CFIE) formulation}~\cite{ColtonKress2019}. The boundary integral equation involves the boundary integral operator
$\mathcal{A}(\kappa) + \frac{1}{2}\mathcal{I}$, with the identity operator $\mathcal{I}$ and kernel function 
\begin{equation*}
    g_\kappa(\px,\py)=\Gk(\px,\py)-i\eta\dfrac{\partial \Gk(\px,\py)}{\partial n_\py}.
\end{equation*} 
We choose $\eta=\kappa$ and construct the $\mathcal{H}$-tensor for $\mathcal{A}$. The discretized identity results in a contribution of a sparse mass matrix $\mat{M}$.\footnote{The mass matrix is appropriately incorporated in all the operations we perform with the system matrix. Its computational contribution is negligible.} The right-hand side $\vb$ contains the Dirichlet boundary values of the excitation. 
The excitation source is either a plane wave with direction $\mathbf{d}$ ($\|\mathbf{d}\|=1$) or a point source located at $\mathbf{p}$, s.t.\ for all $i=1,\dots,n$,
\begin{equation*}
    b_{i}(\kappa) = -\int_\Gamma u^I(\px;\kappa)\psi_i(\px)\mathrm{d}s_\px \;\;\text{with}\;\;
    u^I(\px;\kappa)=\mathrm{exp}(i\kappa\px\cdot\mathbf{d}) \;\;\text{or}\;\; u^I(\px;\kappa)=\Gk(\px,\mathbf{p}).
\end{equation*}

The discretization we employ uses linear discontinuous Galerkin BEM where a mesh of $N_t$ triangles and $N_v$ vertices approximates the boundary. The operators that are discretized have three basis functions for each triangle. Using a sparse matrix $\mat{T}$, these are combined into appropriate continuous linear functions for each of the mesh vertices. This means that the $\mathcal{H}$-tensors and $\mathcal{H}$-matrices we construct are of size $3N_t \times 3N_t$ while the actual system matrix is of size $N_v\times N_v$. Applying it involves only additional cheap operations with $\mat{T}$. See \cite[\S2.2.1,\S5.1.2]{Dirckx2024} for more details.

For all the experiments, we apply the inexact greedy versions of LCORK- and RCORK-GMRES. For the inexactness, we use the bounds from \cref{ssec:inexact-cork} with $\epsilon=10^{-5}$ and unknown smallest singular value replaced by $1$. The relative tolerance used for $\sigma_0$ in LCORK-GMRES is set to $\epsilon/10$. $\sigma_1$ and $\sigma_2$ are fixed at the endpoints of the considered frequency range, and $\sigma_0$ is put at the midpoint. 

The system solution and coefficient matrix application are performed as discussed in \cref{ssec:bem-cork}. In BiCGStab(2), we use $\mat{M}^{\minone}$ as right preconditioner. Unless specified otherwise, the same construction tolerance $\tau=10^{-5}$ is used for $\mathcal{H}$-tensor and direct $\mathcal{H}$-matrix construction, as well as the construction of $\mathcal{H}$-matrices through the $\mathcal{H}$-tensor. To compute the integrals, Sauter quadrature~\cite{SauterSchwab2011book} of degrees $(7,11,13,15)$ are employed. For both AAA approximation and greedy shift selection, we employ 10 samples per dimensionless wavenumber ($\kappa\cdot\mathrm{diam}(\Gamma)$).

All experiments are performed on a compute node consisting of 2 Intel Xeon Platinum 8360Y (Ice Lake) CPUs with 36 cores each, one L3 cache per CPU and 256 GiB total RAM.\footnote{The compute node is part of the `wICE' cluster provided by the VSC (Flemish Supercomputer Center), funded by the Research Foundation -- Flanders (FWO) and the Flemish Government.} Julia code is used for the CORK framework with open-source \CC~package \href{https://gitlab.kuleuven.be/numa/software/beachpack}{BEACHpack} providing efficient implementation of the BEM functionality covered in \cref{sec:5_bem}.\footnote{The version of BEACHpack used to produce the results of the numerical experiments can be found at \href{https://gitlab.kuleuven.be/numa/software/beachpack/-/tree/kb-exp-paper-corkgmres?ref_type=tags}{gitlab.kuleuven.be/numa/software/beachpack/-/tree/kb-exp-paper-corkgmres}. The Julia code is available upon request from the authors.}

\subsection{Performance of greedy CORK-GMRES}

To assess the effectiveness of CORK-GMRES using the greedy shift selection, a first experiment is done on a trefoil knot where the incident field originates from a plane wave going through the hole of the knot. LCORK-GMRES and RCORK-GMRES is applied to the frequency range $\kappa\cdot\mathrm{diam}(\Gamma)\in[10,30]$ for $30$ iterations.

\begin{figure}[tbp]
\centering
\begin{subfigure}[b]{0.49\textwidth}
\begin{tikzpicture}

\pgfplotstableread[col sep=comma]{data/fig2-conv/iglcork-conv-data-trefoil-101612_10.0-30.0-1.0e-5_10.0-10.0-0.001.dat}{\LCorkData}

\begin{axis}[width=\linewidth, 
    axis y line*=right,
    axis x line=none,   
    xmin = -1, xmax = 31,
    ymin = -1, ymax = 46,
    minor ytick={5,15,25,35,45},
    ytick={0,10,20,30,40},
    yticklabels={},
    ymajorgrids=false,
    legend style = {font=\small}]

    \addplot[color=T-Q-HC3,mark=o,mark options=solid,thick] coordinates {(-10,-10)};
    \addplot[color=T-Q-HC4,dashed,mark=x,mark options=solid,thick] coordinates {(-10,-10)};
    \addplot[color=T-Q-HC0,dotted,mark options=solid,thick] coordinates {(-10,-10)};

    \addplot[color=T-Q-HC2,mark=*,mark options=solid,thick, mark size=1pt] table
        [skip first n=1, x index=0, y index = 4]{\LCorkData};

\end{axis}

\begin{semilogyaxis}[width=\linewidth, 
    xmajorgrids = true,
    axis y line* = left,
    ytick pos = left,
    xmin = -1, xmax = 31,
    ymin = 5e-8, ymax = 5e0,
    minor ytick={1e-7,1e-5,1e-3,1e-1,1e-1,1e1},
    ytick={1e-6,1e-4,1e-2,1e-0},
    grid=both, 
    grid style = {dotted,gray},  
    xlabel = {Iteration $k$},
    ylabel = {Rel.\ Res.\ Norm},
    legend style = {font=\small}]

    \addplot[color=T-Q-HC3,mark=o,mark options=solid,thick] table
        [skip first n=1, x index=0, y index = 1]{\LCorkData};
    \addplot[color=T-Q-HC4,dashed,mark=x,mark options=solid,thick] table
        [skip first n=1, x index=0, y index = 2]{\LCorkData};
    \addplot[color=T-Q-HC0,dotted,mark options=solid,thick] table
        [skip first n=1, x index=0, y index = 3]{\LCorkData};

\end{semilogyaxis}

\end{tikzpicture}
\end{subfigure}
\begin{subfigure}[b]{0.49\textwidth}
\begin{tikzpicture}

\pgfplotstableread[col sep=comma]{data/fig2-conv/igrcork-conv-data-trefoil-101612_10.0-30.0-1.0e-5_10.0-10.0-0.001.dat}{\RCorkData}

\begin{axis}[width=\linewidth, 
    axis y line*=right,
    axis x line=none,   
    xmin = -1, xmax = 31,
    ymin = -1, ymax = 46,
    minor ytick={5,15,25,35,45},
    ytick={0,10,20,30,40},
    ylabel={\#System Solves},
    ymajorgrids=false,
    legend style = {font=\small}]

    \addplot[color=T-Q-HC2,mark=*,mark options=solid,thick, mark size=1pt] table
        [skip first n=1, x index=0, y index = 4]{\RCorkData};

\end{axis}

\begin{semilogyaxis}[width=\linewidth, 
    xmajorgrids = true,
    axis y line* = left,
    ytick pos = left,
    xmin = -1, xmax = 31,
    ymin = 5e-8, ymax = 5e0,
    minor ytick={1e-7,1e-5,1e-3,1e-1,1e-1,1e1},
    ytick={1e-6,1e-4,1e-2,1e-0},
    yticklabels={},
    grid=both, 
    grid style = {dotted,gray},  
    xlabel = {Iteration $k$}]

    \addplot[color=T-Q-HC3,mark=o,mark options=solid,thick] table
        [skip first n=1, x index=0, y index = 1]{\RCorkData};
    \addplot[color=T-Q-HC4,dashed,mark=x,mark options=solid,thick] table
        [skip first n=1, x index=0, y index = 2]{\RCorkData};
    \addplot[color=T-Q-HC0,dotted,mark options=solid,thick] table
        [skip first n=1, x index=0, y index = 3]{\RCorkData};
\end{semilogyaxis}

\end{tikzpicture}
\end{subfigure}
\caption{Convergence of LCORK- (left) and RCORK-GMRES (right) applied to the trefoil knot for $\kappa\cdot\mathrm{diam}(\Gamma)\in[10,30]$.
Maximum (solid, blue) and perceived maximum (dashed, red) relative residual norms are shown, as well as the inner relative tolerances (dotted, black) and the cumulative number of BiCGStab(2) solves (dots, yellow).}\label{fig:numexp-conv}
\end{figure}

\Cref{fig:numexp-conv} shows the maximum residual norm over the frequency range, together with the residual norm perceived by the greedy selection (i.e.\ the residual norm at the wavenumber where the estimate is largest). Both methods show good convergence, attaining a relative residual norm below $10^{-5}$ at iteration $22$ (LCORK) and $23$ (RCORK). In addition, the convergence perceived through the greedy shift selection corresponds well to the actual convergence. \Cref{fig:numexp-conv} also reports the cumulative number of BiCGStab(2) solves, showing that $22$ ($23$) iterations require $38$ ($41$) system solutions for LCORK (RCORK). Recall that the number of system solutions can be seen as a proxy for the cost of CORK-GMRES in terms of naive system solutions at individual wavenumbers, although this assumes the system solves encompass the majority of the runtime.

\begin{figure}[tbp]
\centering
\begin{tikzpicture}
\begin{groupplot}[group style={group size=2 by 1, horizontal sep=0.02\linewidth, vertical sep=0.06\linewidth}]

\pgfplotstableread{data/fig3-frequency/iglcork-freq-realres-data_trefoil-knot-10.0-30.0-1.0e-10_10.0-10.0-0.001.dat}{\LeftFreqData}
\pgfplotstableread{data/fig3-frequency/iglcork-freq-compres-data_trefoil-knot-10.0-30.0-1.0e-10_10.0-10.0-0.001.dat}{\LeftEstData}

\pgfplotstableread{data/fig3-frequency/igrcork-freq-realres-data_trefoil-knot-10.0-30.0-1.0e-10_10.0-10.0-0.001.dat}{\RightFreqData}
\pgfplotstableread{data/fig3-frequency/igrcork-freq-compres-data_trefoil-knot-10.0-30.0-1.0e-10_10.0-10.0-0.001.dat}{\RightEstData}

\pgfplotsset{select coords between index/.style 2 args={
    x filter/.code={
        \ifnum\coordindex<#1\def\pgfmathresult{}\fi
        \ifnum\coordindex>#2\def\pgfmathresult{}\fi
    }
}}

\nextgroupplot[width=0.52\linewidth, 
    xmajorgrids = true,
    ymode = log,
    xmin = 9, xmax = 31,
    ymin = 2e-12, ymax = 2e0,
    minor ytick={1e-11,1e-9,1e-8,1e-7,1e-5,1e-4,1e-3,1e-1,1e-0},
    ytick={1e-10,1e-6,1e-2},
    grid=both, 
    grid style = {dotted,gray},
    ylabel = {Rel.\ Res.\ Norm},
    legend to name=grouplegend,
    legend style = {column sep = 1em, font=\scriptsize},
    legend columns = 2]

    \addplot[color=T-Q-HC4] table
        [skip first n=1, x index=0, y index = 1]{\LeftFreqData};
    \addplot[color=T-Q-HC3,dashed, dash pattern=on 1.5pt off 1.5pt, line cap=round] table
        [skip first n=1, x index=0, y index = 1]{\LeftEstData};
        
    \addplot[color=T-Q-HC4] table
        [skip first n=1, x index=0, y index = 2]{\LeftFreqData};
    \addplot[color=T-Q-HC3,dashed, dash pattern=on 1.5pt off 1.5pt, line cap=round] table
        [skip first n=1, x index=0, y index = 2]{\LeftEstData};
        
    \addplot[color=T-Q-HC4] table
        [skip first n=1, x index=0, y index = 3]{\LeftFreqData};
    \addplot[color=T-Q-HC3,dashed, dash pattern=on 1.5pt off 1.5pt, line cap=round] table
        [skip first n=1, x index=0, y index = 3]{\LeftEstData};

\nextgroupplot[width=0.52\linewidth, 
    xmajorgrids = true,
    ymode = log,
    xmin = 9, xmax = 31,
    ymin = 2e-12, ymax = 2e0,
    minor ytick={1e-11,1e-9,1e-8,1e-7,1e-5,1e-4,1e-3,1e-1,1e-0},
    ytick={1e-10,1e-6,1e-2},
    yticklabels={},
    grid=both, 
    grid style = {dotted,gray},
    legend style = {font=\small},
    unbounded coords=jump]

    \addplot[color=T-Q-HC4] table
        [skip first n=1, x index=0, y index = 1]{\RightFreqData};
    \addplot[color=T-Q-HC4] table
        [skip first n=1, x index=0, y index = 2]{\RightFreqData};
    \addplot[color=T-Q-HC4] table
        [skip first n=1, x index=0, y index = 3]{\RightFreqData};

    \addplot[color=T-Q-HC3,dashed, dash pattern=on 1.5pt off 1.5pt, line cap=round] table
        [skip first n=1, x index=0, y index = 1]{\RightEstData};
    \addplot[color=T-Q-HC3,dashed, dash pattern=on 1.5pt off 1.5pt, line cap=round] table
        [skip first n=1, x index=0, y index = 2]{\RightEstData};
    \addplot[color=T-Q-HC3,dashed, dash pattern=on 1.5pt off 1.5pt, line cap=round] table
        [skip first n=1, x index=0, y index = 3]{\RightEstData};

\end{groupplot}
\end{tikzpicture}
\caption{True ($\|\vec{r}(\mu)\| \, /\, \|\vb(\mu)\|$; solid, blue) and computed ($\|\tilde{\bvec{r}}(\mu)\| \, /\, \|\bvd(\mu)\|$; dashed, red) relative residual norms of LCORK- (left) and RCORK-GMRES (right) applied to the trefoil knot, for iteration numbers $10$, $20$ and $30$.
}\label{fig:numexp-conv2}
\end{figure}

In \cref{fig:numexp-conv2}, the residual norms are plotted as a function of the frequency, for iteration numbers $10$, $20$ and $30$ and a stricter inexactness tolerance of $10^{-10}$. The greedy selection is effective in reducing the residual norm across the range.\footnote{The scale of the computed residual norm differs from that of the true one, yet it is the shape over the frequency range that matters. The discrepancy in scale is because the computed residual is that of the pencil, not the actual parametrized system.}

\subsection{Convergence results on multiple meshes}

The robustness of CORK-GMRES is tested by performing 30 iterations on six different shapes with $\kappa\cdot\mathrm{diam}(\Gamma)\in[20,30]$. The number of triangles and vertices range from \num{98304} to \num{195014}, and \num{49154} to \num{97509} respectively. \Cref{tab:meshes} reports on the required CORK-GMRES iterations (Out), the number of required solves, i.e.\ FGMRES iterations (Sol), and the total number of matrix-vector products (MV), to attain a maximal relative residual norm below $\epsilon=10^{-1}$, $10^{-3}$ and $10^{-5}$. The time required to attain $\epsilon=10^{-5}$ is reported as well. 

The results of LCORK- and RCORK-GMRES are comparable, both in terms of BiCGStab(2) solves and matrix-vector products, as well as time spent. The required number of `naive solves' to attain the desired accuracy over the whole wavenumber interval ranges from $17$ to $33$. Note that the inexact Krylov bounds in \cref{sec:4_inexact-rk} are partly heuristic, and the smallest singular value is substituted by $1$. We have observed that some cases require to tighten the inexactness tolerance. Scaling also influences where convergence stagnates, see Remark \ref{rmk:scaling}.

\begin{table}[htbp]
    \centering
    \begin{subtable}[htbp]{\linewidth}
    \centering
    \begin{filecontents*}{data/tab2-meshes/iglcork-table-results.csv}
Shape,OutA,ExpA,SolA,MVA,OutC,ExpC,SolC,MVC,OutE,ExpE,SolE,MVE,Time
sphere,8,5,14,258,17,14,23,411,18,18,24,420,112.590847533
lathe part,5,5,9,261,12,12,16,444,13,13,17,453,157.989923822
submarine,10,10,22,1138,11,11,23,1179,12,12,24,1196,147.296088265
indented cube,13,13,28,764,16,16,31,843,16,16,31,843,136.847267198
crankshaft,8,8,18,758,12,12,23,959,15,15,26,1034,284.927622782
trefoil knot,8,7,16,684,11,11,19,807,13,13,21,873,185.574743684
\end{filecontents*}
\pgfplotstabletypeset[
        empty cells with={–\hspace*{0.1em}},
        col sep=comma,
        columns/Shape/.style    = {string type, column type=>{\em}l, column name=\emph{Shape}},
        columns/OutA/.style     = {column type=r, column name=It},
        columns/SolA/.style     = {column type=r, column name=Sol},
        columns/MVA/.style      = {column type=r, column name=MV},
        columns/OutC/.style     = {column type=r, column name=It},
        columns/SolC/.style     = {column type=r, column name=Sol},
        columns/MVC/.style      = {column type=r, column name=MV},
        columns/OutE/.style     = {column type=r, column name=It},
        columns/SolE/.style     = {column type=r, column name=Sol},
        columns/MVE/.style      = {column type=r, column name=MV},
        columns/Time/.style     = {column type=r, precision=1, fixed zerofill, column name=Time},
        columns = {Shape,OutA,SolA,MVA,OutC,SolC,MVC,OutE,SolE,MVE,Time},
        every head row/.style = {
            before row = {
                \toprule
                & \multicolumn{3}{c}{$\epsilon=10^{-1}$} 
                & \multicolumn{3}{c}{$\epsilon=10^{-3}$}
                & \multicolumn{4}{c}{$\epsilon=10^{-5}$} \\ 
                \cmidrule(lr){2-4} \cmidrule(lr){5-7} \cmidrule(lr){8-11}}, 
            after row=\midrule},
        every last row/.style = {after row=\bottomrule}
    ]{data/tab2-meshes/iglcork-table-results.csv}\caption{Left-preconditioned CORK-GMRES}
\end{subtable}
\\
\begin{subtable}[htbp]{\linewidth}
    \centering
    \begin{filecontents*}{data/tab2-meshes/igrcork-table-results.csv}
Shape,OutA,ExpA,SolA,MVA,OutC,ExpC,SolC,MVC,OutE,ExpE,SolE,MVE,Time
sphere,14,14,22,406,17,17,25,457,21,21,29,501,125.566817768
lathe part,10,8,16,464,13,13,19,547,14,14,20,564,188.071660405
submarine,10,10,20,992,12,12,22,1070,13,13,23,1083,139.64096495
indented cube,14,14,28,780,17,17,31,859,19,19,33,897,152.2848113
crankshaft,7,6,13,549,12,12,21,869,14,14,23,919,269.889060178
trefoil knot,9,9,18,782,11,11,20,860,13,13,22,922,191.466257827
\end{filecontents*}
\pgfplotstabletypeset[
        empty cells with={–\hspace*{0.1em}},
        col sep=comma,
        columns/Shape/.style    = {string type, column type=>{\em}l, column name=\emph{Shape}},
        columns/OutA/.style     = {column type=r, column name=It},
        columns/SolA/.style     = {column type=r, column name=Sol},
        columns/MVA/.style      = {column type=r, column name=MV},
        columns/OutC/.style     = {column type=r, column name=It},
        columns/SolC/.style     = {column type=r, column name=Sol},
        columns/MVC/.style      = {column type=r, column name=MV},
        columns/OutE/.style     = {column type=r, column name=It},
        columns/SolE/.style     = {column type=r, column name=Sol},
        columns/MVE/.style      = {column type=r, column name=MV},
        columns/Time/.style     = {column type=r, precision=1, fixed zerofill, column name=Time},
        columns = {Shape,OutA,SolA,MVA,OutC,SolC,MVC,OutE,SolE,MVE,Time},
        every head row/.style = {
            before row = {
                \toprule
                & \multicolumn{3}{c}{$\epsilon=10^{-1}$} 
                & \multicolumn{3}{c}{$\epsilon=10^{-3}$}
                & \multicolumn{4}{c}{$\epsilon=10^{-5}$} \\ 
                \cmidrule(lr){2-4} \cmidrule(lr){5-7} \cmidrule(lr){8-11}}, 
            after row=\midrule},
        every last row/.style = {after row=\bottomrule}
    ]{data/tab2-meshes/igrcork-table-results.csv}\caption{Right-preconditioned CORK-GMRES}
\end{subtable}\caption{Number of outer CORK iterations (It), solves (Sol) and matrix-vector product (MV) for six different shapes and three achieved values of the real relative residual norm $\epsilon=\max_\mu\|\vec{r}(\mu)\|/\|\vec{b}(\mu)\|$. The time in seconds is given for $\epsilon=10^{-5}$.}\label{tab:meshes}
\end{table}

\subsection{Computational and memory cost of the $\mathcal{H}$-tensor}

The computational and memory cost in using CORK-GMRES to perform frequency sweeping, is largely determined by the $\mathcal{H}$-tensor. $\mathcal{H}$-tensors are constructed for the indented cube, hit by a point source, at three construction tolerances and over four different frequency ranges. \Cref{tab:abs-htensor-cost} compiles the memory footprint, degree and construction time of the $\mathcal{H}$-tensors, as well as the three important $\mathcal{H}$-tensor operations: slicing, matrix-vector multiplication (MV) and coefficient matrix application (CMA). More extensive experiments on the $\mathcal{H}$-tensor representation are performed in \cite{Bruyninckx2026}.

\begin{table}[htbp]
    \centering
    \centering

\begin{filecontents*}{data/tab3-timings/indented-cube-5_htensor-timings.csv}
tol;intervals;Mem;Degr;Build;SliceMean;SliceDev;MV;CMA
\midrule $10^{-3}$;$[10,20]$;34.402201184;8.0;150.889502509;0.3723501615;0.3723501615;0.179755695;0.2164238765
;$[20,30]$;34.895438704;9.0;153.844496704;0.3936454325;0.3936454325;0.1847869025;0.232100244
;$[30,40]$;35.816642304;9.0;162.307291446;0.4145606745;0.4145606745;0.1874486245;0.246731118
;$[10,30]$;41.839564896;12.0;268.387515144;0.463537103;0.463537103;0.19749002;0.2856457385
\midrule $10^{-5}$;$[10,20]$;57.00537448;11.0;583.31392887;1.065270613;1.065270613;0.2852821765;0.4139099885
;$[20,30]$;58.269675904;11.0;593.911764358;1.1446405425;1.1446405425;0.291907246;0.4278983015
;$[30,40]$;59.991074128;11.0;629.782024983;1.234889338;1.234889338;0.308852335;0.439145884
;$[10,30]$;68.014422368;14.0;980.995242404;1.470867067;1.470867067;0.325459258;0.529819221
\midrule $10^{-7}$;$[10,20]$;89.54324408;12.0;1801.691805024;2.6964040545;2.6964040545;0.4624007645;0.62476725
;$[20,30]$;91.753255504;12.0;1859.400688796;2.878023915;2.878023915;0.486256403;0.654712319
;$[30,40]$;94.587928704;12.0;1940.01873924;3.076766874;3.076766874;0.500321633;0.673218024
;$[10,30]$;107.690317328;17.0;2909.387719868;3.5979488535;3.5979488535;0.5507280955;0.888570688
\end{filecontents*}
\pgfplotstabletypeset[
    col sep=semicolon,
  columns/tol/.style   ={string type, column type=>{}c, column name={\small $\tau$}},
  columns/intervals/.style    ={string type, column type=>{}r, column name=$\kappa\cdot\mathrm{diam}(\Gamma)$},
  columns/Mem/.style    ={column type=c, precision=1, fixed zerofill, column name=Mem, 
  postproc cell content/.append style={
    @cell content={\tablenum[round-mode=places, round-precision=1, table-format=3.1]{##1}}
  }},
  columns/Degr/.style  ={column type=c, precision=0, fixed zerofill, column name=$d$,
  postproc cell content/.append style={
    @cell content={\tablenum[round-mode=places, round-precision=0, table-format=2.0]{##1}}
  }},
  columns/Build/.style  ={column type=c, precision=1, fixed zerofill, column name=Build, 
  postproc cell content/.append style={
    @cell content={\tablenum[round-mode=places, round-precision=1, table-format=4.1]{##1}}
  }},
  columns/SliceMean/.style  ={column type=c, precision=3, fixed zerofill, column name=Slice, 
  postproc cell content/.append style={
    @cell content={\tablenum[round-mode=places, round-precision=3, table-format=1.3]{##1}}
  }},
  columns/MV/.style  ={column type=c, precision=3, fixed zerofill, column name=MV, 
  postproc cell content/.append style={
    @cell content={\tablenum[round-mode=places, round-precision=3, table-format=1.3]{##1}}
  }},
  columns/CMA/.style  ={column type=c, precision=3, fixed zerofill, column name=CMA, 
  postproc cell content/.append style={
    @cell content={\tablenum[round-mode=places, round-precision=3, table-format=1.3]{##1}}
  }},
  columns={tol,intervals,Mem,Degr,Build,SliceMean,MV,CMA},
  every head row/.style={before row=\toprule},
  every last row/.style={after row=\bottomrule},
  every row/.style={after row=\midrule},
  assign column name/.style={/pgfplots/table/column name={\small{#1}}}
]{data/tab3-timings/indented-cube-5_htensor-timings.csv}\caption{Memory usage in GB, degree $d$, and construction, slice, matrix-vector product (MV), and coefficient matrix application (CMA) times in seconds for the $\mathcal{H}$-tensor, for the indented cube at three different tolerances $\tau$ and four different intervals.}\label{tab:abs-htensor-cost}
\end{table}

Both widening the wavenumber interval, and moving it to higher frequencies, increase the cost in all aspects, although it is mainly the former that has a significant impact here. Lowering the tolerance also has a large effect, especially on the construction time, because the quadrature degrees are increased as well.\footnote{For tolerances $\tau=10^{-3}$ ($10^{-7}$), quadrature degrees are decreased (increased) by $2$.} Note that the computation time of CMA compared to MV, is lower than the degree would indicate.

In \cref{tab:rel-htensor-cost}, the memory, construction and matrix-vector product of the $\mathcal{H}$-tensor are compared to those of the $\mathcal{H}$-matrices at the frequency boundaries. Memory footprint of the $\mathcal{H}$-tensor is around 2.7--3.5 times greater than a $\mathcal{H}$-matrix. The matrix-vector product shows a similar ratio, as expected. The increase in construction time is more significant, ranging between 13 and 24 times longer. This discrepancy is partly because tensorial ACA first overestimates the ranks of the $\mathcal{H}$-tensor, which is then alleviated by a recompression step. Additionally, whereas memory usage depends only linearly on the rank, the construction also has terms with quadratic and cubic rank dependence. Thus, the rank increase from $\Hm$-matrix to $\Hm$-tensor impacts the construction time more strongly.

\begin{table}[htbp]
    \centering
    \centering

\pgfplotstableread[col sep=semicolon]{data/tab3-timings/indented-cube-5_timing-ratios.csv}\ratiotable

\pgfplotstabletypeset[
    col sep=semicolon,
  columns/tol/.style   ={string type, column type=>{}c, column name={\small $\tau$}},
  columns/intervals/.style    ={string type, column type=>{}r, column name=$\kappa\cdot\mathrm{diam}(\Gamma)$},
  columns/Memm/.style    ={column type=c, column name={}, 
  postproc cell content/.append style={
    @cell content={\tablenum[round-mode=places, round-precision=2, table-format=1.2]{##1}\hspace{-2em}}
  }},
  columns/Memd/.style    ={column type=c, column name={\hspace{-3em}Mem},
  postproc cell content/.append style={
    @cell content={\hspace{-1em}{\,\footnotesize$\pm$}\footnotesize\tablenum[round-mode=places, round-precision=2, table-format=1.2]{##1}}
  }},
  columns/Buildm/.style  ={column type=c, column name={},
  postproc cell content/.append style={
    @cell content={\tablenum[round-mode=places, round-precision=2, table-format=2.2]{##1}}
  }},
    columns/Buildd/.style  ={column type=c, column name={\hspace{-3em}Build},
    postproc cell content/.append style={
    @cell content={\hspace{-1em}{\,\footnotesize$\pm$}\footnotesize\tablenum[round-mode=places, round-precision=2, table-format=1.2]{##1}}
  }},
  columns/MVm/.style  ={column type=c, column name={},
  postproc cell content/.append style={
    @cell content={\tablenum[round-mode=places, round-precision=2, table-format=1.2]{##1}}
  }},
  columns/MVd/.style  ={column type=c, column name={\hspace{-3em}MV},
  postproc cell content/.append style={
    @cell content={\hspace{-1em}{\,\footnotesize$\pm$}\footnotesize\tablenum[round-mode=places, round-precision=2, table-format=1.2]{##1}}
  }},
  columns={tol,intervals,Memm,Memd,Buildm,Buildd,MVm,MVd},
  every head row/.style={before row=\toprule},
  every last row/.style={after row=\bottomrule},
  every row/.style={after row=\midrule},
  assign column name/.style={/pgfplots/table/column name={\small{#1}}}
]{\ratiotable}\caption{Ratios of memory usage, construction time and matrix-vector product timings (MV) of the $\mathcal{H}$-tensor over the $\mathcal{H}$-matrices at the wavenumber interval boundaries, for the indented cube at three different tolerances $\tau$ and four different intervals.}\label{tab:rel-htensor-cost}
\end{table}

Using Tables \ref{tab:meshes}, \ref{tab:abs-htensor-cost} and \ref{tab:rel-htensor-cost}, one is able to more concretely compare the total cost of CORK-GMRES against a naive solve. Let us investigate the frequency sweeping of the indented cube  using RCORK, over $\kappa\cdot\mathrm{diam}(\Gamma)\in[20,30]$ with tolerances of $10^{-5}$. Considering $19$ outer iterations, $33$ solves, and $897$ matrix-vector products, the total cost, neglecting orthogonalization, is: $19$ CMAs and slices, $33$ MVs with the $\mathcal{H}$-tensor, and $897$ MVs with $\mathcal{H}$-matrices. This results in a total time of $137.3$ seconds, which corresponds well with the total time measured in \cref{tab:meshes}. A naive solve, assuming $27$ matrix-vector products are required (In/Sol), would take around $2.94$ seconds. Again, actual measured naive solves take $2.92$--$3.50$ seconds. The estimated number of desired frequencies such that CORK-GMRES is superior, is only around $40$ to $53$.

\subsection{Frequency sweeping acoustic scattering by a submarine}

As a final experiment, we perform a frequency sweep on the larger mesh of a submarine ($N_t=\num{400886}$, $N_v=\num{200445}$).\footnote{Here, we used a VSC `big memory' node to fit the $\Hm$-tensor in memory. The hardware difference is the RAM increase to 2048 GiB and that two NUMA domains are present instead of one.} The source is a plane wave with frequency between $100$ and $250$ Hz. This corresponds to $\kappa\cdot\mathrm{diam}(\Gamma)\in[25.97,64.93]$ or about 4 to 10 wavelengths over the submarine's length. During the greedy shift selection, the true residual norm at the next shift is recorded. Once this norm is below the tolerance three times in a row, we consider CORK-GMRES to have converged, which was the case here.

\Cref{fig:freq-sweep-bars} displays the time spent in the different parts of CORK-GMRES. The orthogonalization cost is negligible and thus not shown. The main cost is in the system solution, around 80\% of the total time. The cost of LCORK- and RCORK-GMRES is similar. To improve greedy shift selection and detection of convergence, one may compute the true residual norm at multiple wavenumbers. Say we test $\ell$ wavenumbers, the cost of greedy shift selection would be roughly $\ell$ times higher.

\begin{figure}
    \centering
    \begin{tikzpicture}[scale=1]

\pgfplotstableread[col sep=comma]{data/fig4-bars/lcork-bar-data-beTSSi_mod2_res16_25.97-64.93_1.0e-5-1.0e-5_0.01-0.01-0.0001.csv}{\leftdataT}
\pgfplotstableread[col sep=comma]{data/fig4-bars/rcork-bar-data-beTSSi_mod2_res16_25.97-64.93_1.0e-5-1.0e-5_0.01-0.01-0.0001.csv}{\rightdataT}

\begin{axis}[bar width = 9pt, ymin=4.5, ymax=21.5 ,xmin=0, xmax=2e3, 
            height=0.45\linewidth, width=0.8\linewidth, 
            xtick={0,500,1000,1500,2000,2500,3000}, 
            xlabel={Time [s]}, 
            minor x tick num=4,
            xmajorgrids, xminorgrids, 
            minor grid style = {dotted},
            y tick label style={major tick length=0pt}, ytick={8.5,15.5}, 
            yticklabels={LCORK, RCORK},
            legend pos = north east,
            legend columns = -1,
            legend style={
                /tikz/every even column/.append style={column sep=5pt},
            }
            ]

\addplot[fill=T-Q-MC2, pattern=crosshatch,
        only marks, mark=square*, mark size = 3] coordinates {(-10,-10)};
\addplot [fill=T-Q-MC3, pattern=north east lines, only marks, mark=square*, mark size = 3] coordinates {(-10,-10)};
\addplot [fill=T-Q-MC4, pattern=crosshatch dots, only marks, mark=square*, mark size = 3] coordinates {(-10,-10)};
\addplot [fill=T-Q-MC5, pattern=north west lines, only marks, mark=square*, mark size = 3] coordinates {(-10,-10)};
\addplot [fill=T-Q-MC6, fill=white, only marks, mark=square*, mark size = 3] coordinates {(-10,-10)};

\addplot[xbar interval, fill=T-Q-MC2] 
    table[x index=0, y expr={6+\coordindex}]{\leftdataT};
\addplot[xbar interval, pattern=crosshatch, ]
    table[x index=0, y expr={6+\coordindex}]{\leftdataT};
    
\addplot [xbar interval, fill=T-Q-MC3] 
    table[x index=1, y expr={7+\coordindex}]{\leftdataT};
\addplot [xbar interval, pattern=north east lines, ]
    table[x index=1, y expr={7+\coordindex}]{\leftdataT};

\addplot [xbar interval, fill=T-Q-MC4] 
    table[x index=2, y expr={8+\coordindex}]{\leftdataT};
\addplot [xbar interval, pattern=crosshatch dots, ]
    table[x index=2, y expr={8+\coordindex}]{\leftdataT};

\addplot [xbar interval, fill=T-Q-MC5] 
    table[x index=3, y expr={9+\coordindex}]{\leftdataT};
\addplot [xbar interval, pattern=north west lines, ]
    table[x index=3, y expr={9+\coordindex}]{\leftdataT};

\addplot [xbar interval, fill=T-Q-MC6] 
    table[x index=5, y expr={10+\coordindex}]{\leftdataT};

\addplot[xbar interval, fill=T-Q-MC2] 
    table[x index=0, y expr={13+\coordindex}]{\rightdataT};
\addplot[xbar interval, pattern=crosshatch, ]
    table[x index=0, y expr={13+\coordindex}]{\rightdataT};

\addplot [xbar interval, fill=T-Q-MC3] 
    table[x index=1, y expr={14+\coordindex}]{\rightdataT};
\addplot [xbar interval, pattern=north east lines, ]
    table[x index=1, y expr={14+\coordindex}]{\rightdataT};

\addplot [xbar interval, fill=T-Q-MC4] 
    table[x index=2, y expr={15+\coordindex}]{\rightdataT};
\addplot [xbar interval, pattern={crosshatch dots[radius=2pt, distance=10pt]}, pattern color=black]
    table[x index=2, y expr={15+\coordindex}]{\rightdataT};

\addplot [xbar interval, fill=T-Q-MC5] 
    table[x index=3, y expr={16+\coordindex}]{\rightdataT};
\addplot [xbar interval, pattern=north west lines, ]
    table[x index=3, y expr={16+\coordindex}]{\rightdataT};

\addplot [xbar interval, fill=T-Q-MC6] 
    table[x index=5, y expr={17+\coordindex}]{\rightdataT};

\legend{CMA,Slice,Solve,Greedy,Total}

\end{axis}
\end{tikzpicture}
    \caption{Time spent on the different components of LCORK- and RCORK-GMRES when applied to the refined submarine: Coefficient matrix application (CMA), slicing, system solution, and greedy shift selection. Total time is also shown.}
    \label{fig:freq-sweep-bars}
\end{figure} 

Finally, we compare LCORK-GMRES to naively solving at a set of wavenumber samples in the range. \Cref{fig:freq-sweep-comparison} plots the cost in time as a function of the number of samples, including construction costs. For LCORK-GMRES, the time also considers the computation of the approximate solution $\vx_m(\kappa)$ for each sample. There are two types of naive solves: constructing a $\Hm$-matrix at each sample directly, or constructing one $\Hm$-tensor and slicing it at each sample.

First, we mention that on average, the construction of the $\Hm$-tensor took approximately $52.2$ times as long as the $\Hm$-matrix construction at the middle frequency while the ratio in memory footprint is only around $3.35$. The discrepancy is larger than in \cref{tab:rel-htensor-cost}, which is explained by worse load balancing. It is observed that for the $\Hm$-tensor, construction of the largest blocks takes longer, relative to the small blocks, than for the $\Hm$-matrices. If a few blocks, less than the number of cores, take most of the time, load balancing degrades unless parallelization is done within those blocks. Further investigation, e.g.\ comparing to sequential construction, is necessary.

\begin{figure}
    \centering
    \begin{subfigure}[t]{0.65\textwidth}
\begin{tikzpicture}[baseline=(current bounding box.north),scale=1,          
                    spy using outlines={
                        rectangle,
                        magnification=3,
                        width=5cm,
                        height=3cm,
                        connect spies
                    }]

\pgfplotstableread[col sep=comma]{data/fig5-comparison/lcork-comparison-data-beTSSi_mod2_res16_26.0-64.9_1.0e-5-1.0e-5_0.01-0.01-0.0001.csv}{\leftdata}
\pgfplotstableread[col sep=comma]{data/fig5-comparison/rcork-comparison-data-beTSSi_mod2_res16_26.0-64.9_1.0e-5-1.0e-5_0.01-0.01-0.0001.csv}{\rightdata}
\pgfplotstableread[col sep=comma]{data/fig5-comparison/naive-data_beTSSi_mod2_res16-26.0-64.9-1.0e-5.csv}{\naivedata}
\pgfplotstableread[col sep=comma]{data/fig5-comparison/htensor-data_beTSSi_mod2_res16-26.0-64.9-1.0e-5-rhs.csv}{\htensordata}

\begin{axis}[height=0.6\linewidth, width=\linewidth, 
            xmin = 20, xmax = 100,
            ymin = 8.5e3, ymax = 2.3e4,
            xlabel={\#samples}, 
            ylabel={Time [s]},
            xtick={20,40,60,80,100},
            ytick={1e4,1.5e4,2e4,3e4,4e4,5e4,6e4,7e4,8e4,9e4,1e5}, 
            extra y ticks={7.5e3,1.25e4, 1.75e4, 2.25e4},
            grid=both,
            major grid style={densely dotted},
            minor grid style={densely dotted},
            extra y tick label=\empty,
            extra y tick style={
                major tick length=2pt,              
                major grid style={densely dotted}
            },
            extra x ticks={30,50,70,90},
            extra x tick label=\empty,
            extra x tick style={
                major tick length=2pt,              
                major grid style={densely dotted}
            },
            legend pos = north west,
            legend columns = -1,
            legend style={anchor = north west}
            ]

    \addplot[color=T-Q-HC2,mark=o,mark options={solid,thick}] table
        [skip first n=1, x index=0, y index = 4]{\leftdata};

    \addplot[color=T-Q-HC4,mark=diamond, mark options={solid,thick}, mark size=3pt] table
        [skip first n=1, x index=0, y index = 3]{\htensordata};

    \addplot[color=T-Q-HC3,mark=square,mark options={solid,thick}] table
        [skip first n=1, x index=0, y index = 1]{\naivedata};

    \legend{LCORK, $\Hm$-tensor, Naive};

\coordinate (magnifying glass) at (axis cs: 120, 1.4e4);
\coordinate (spy point) at (axis cs: 60, 1.5e4);

\end{axis}

\end{tikzpicture}
\end{subfigure}
\begin{subfigure}[t]{0.33\textwidth}
\begin{tikzpicture}[baseline=(current bounding box.north),scale=1]

\pgfplotstableread[col sep=comma]{data/fig5-comparison/lcork-comparison-data-beTSSi_mod2_res16_26.0-64.9_1.0e-5-1.0e-5_0.01-0.01-0.0001.csv}{\leftdata}
\pgfplotstableread[col sep=comma]{data/fig5-comparison/rcork-comparison-data-beTSSi_mod2_res16_26.0-64.9_1.0e-5-1.0e-5_0.01-0.01-0.0001.csv}{\rightdata}
\pgfplotstableread[col sep=comma]{data/fig5-comparison/naive-data_beTSSi_mod2_res16-26.0-64.9-1.0e-5.csv}{\naivedata}
\pgfplotstableread[col sep=comma]{data/fig5-comparison/htensor-data_beTSSi_mod2_res16-26.0-64.9-1.0e-5-rhs.csv}{\htensordata}

\begin{axis}[height=\linewidth, width=\linewidth, 
            xmin = 48, xmax = 63,
            ymin = 1.275e4, ymax = 1.375e4,
            restrict x to domain=40:80,
            restrict y to domain=1e4:2e4,
            xtick={50,55,60,65}, 
            xlabel={}, 
            ylabel={},
            ytick={1.25e4,1.3e4,1.35e4,1.4e4}, 
            tick label style={font=\small},
            extra y ticks={1.26e4, 1.27e4, 1.28e4, 1.29e4, 1.31e4, 1.32e4, 1.33e4, 1.34e4, 1.36e4, 1.37e4, 1.38e4, 1.39e4},
            grid=both,
            major grid style={solid},
            minor grid style={dotted},
            extra y tick label=\empty,
            extra y tick style={
                major tick length=2pt,              
                major grid style={dotted}
            },
            ]

    \addplot[color=T-Q-HC2,mark=o,mark options={solid,thick}] table
        [skip first n=1, x index=0, y index = 4]{\leftdata};

    \addplot[color=T-Q-HC4,mark=diamond, mark options={solid,thick}, mark size=3pt] table
        [skip first n=1, x index=0, y index = 3]{\htensordata};

    \addplot[color=T-Q-HC3,mark=square,mark options={solid,thick}] table
        [skip first n=1, x index=0, y index = 1]{\naivedata};

\end{axis}

\end{tikzpicture}
\end{subfigure}
    \caption{Frequency sweeping cost in time as a function of the number of samples, for LCORK-GMRES, and naive solution with and without $\Hm$-tensor; applied to the refined submarine.}
    \label{fig:freq-sweep-comparison}
\end{figure}

The ratio in construction time influences the results in \cref{fig:freq-sweep-comparison}. Using the $\Hm$-tensor, either with CORK-GMRES or without, pays off once over 50 to 55 samples. If CORK-GMRES is used, almost all the cost is `offline', because computing the approximate solutions is very cheap. Thus, it remains feasible to evaluate the solution at an increasingly large number of frequencies, in contrast to using only the $\Hm$-tensor.

Finally, we note that, in this experiment, the additional cost of CORK-GMRES is low relative to the $\Hm$-tensor construction, partially due to suboptimal parallelization. However, other parameters also affect this ratio, e.g., lowering the quadrature degrees reduces construction cost. Therefore, the relative cost between $\Hm$-tensor construction and CORK-GMRES execution may vary across setups. Frequency sweeping with the combination stays efficient regardless, given that the cost of repeated system solution depends on the parameter setup in a similar fashion.

\section{Conclusions and future work}\label{sec:7_conclusion}

We have developed a new left- and right-preconditioned Krylov method for solving parametrized linear systems through linearization. While this results in very large systems, using the CORK framework the structure within the companion pencil is exploited and efficiency is maintained. The methods are more general than existing linearization methods and Krylov methods for shifted systems. Moreover, convergence is improved through the use of multiple shifts, which can be chosen greedily and fully adaptively in a cheap manner. Incorporating inexact Krylov theory reduces computational cost, since the tolerances in later iterations can be loosened while maintaining accuracy of the overall solution. Finally, we adapt the methodology to allow for  parameter dependence in the right-hand side.

In combination with the $\mathcal{H}$-tensor format, the framework enables efficient frequency sweeping of three-dimensional scattering problems through Helmholtz BEM. It performs well for many shapes and moderate frequencies, and already outperforms the naive approach computationally once a modest number of frequency samples are desired, albeit with increased memory usage.

Future work may include exploration of broader applications, model order reduction (e.g.\ including field evaluation in Helmholtz BEM), development of compatible data-sparse representations for high-frequency scattering problems (for example inspired by \cite{GopalMartinsson2023}), the computation of scattering poles (i.e.\ eigenvalues of integral operator $\mathcal{A}(\kappa)$), and further investigation of the effects of scaling in linearization.

\bibliographystyle{abbrv}
\bibliography{references}

\end{document}